\documentclass[final]{siamart250211}

\usepackage{mathtools}
\usepackage{cite}
\usepackage{amsfonts}
\usepackage{graphicx}
\usepackage{epstopdf}
\usepackage{algorithmic}
\usepackage{tikz}
\usepackage{pgfplots}
\pgfplotsset{compat=1.13}
\usepackage{xcolor}
\usepackage{caption}

\usetikzlibrary{decorations.pathreplacing,matrix,calc}

\usepackage[section]{placeins}

\definecolor{SPECorange}{rgb}{1.0,.5625,0}
\definecolor{SPECblue}{rgb}{0,0,0.75}
\definecolor{SPECred}{rgb}{0.75,0,0}
\definecolor{SPECgreen}{rgb}{0,0.75,0}
\definecolor{SPECblack}{rgb}{0.75,0.75,0.75}

\ifpdf
  \DeclareGraphicsExtensions{.eps,.pdf,.png,.jpg}
\else
  \DeclareGraphicsExtensions{.eps}
\fi

\AtBeginEnvironment{bmatrix}{\setlength{\arraycolsep}{2pt}}

\numberwithin{theorem}{section}

\newcommand{\TheTitle}{
A Fast and Stable Square-Root-Free Unitary Core-Chasing Algorithm
}
 
\newcommand{\TheAuthors}{M. Esquivel-Rosado, J. L. Aurentz, G. Barbarino, and M. C. Quintana}

\headers{\TheTitle}{\TheAuthors}

\title{{\TheTitle}
}

\author{
  M\'onica Esquivel-Rosado\thanks{Universidad de Huelva, Huelva, Spain
    (\email{monica.esquivel@dmat.uhu.es}).}
  \and
  Jared L. Aurentz\thanks{Universidad de Huelva, Huelva, Spain
    (\email{jared.aurentz@dci.uhu.es}).}
  \and
  Giovanni Barbarino\thanks{Universidad de Huelva, Huelva, Spain
    (\email{giovanni.barbarino@dci.uhu.es}).}
  \and
  Mar\'ia C. Quintana \thanks{Universidad de Huelva, Huelva, Spain
    (\email{mc.quintana@dci.uhu.es}).}
}

\newsiamthm{example}{Example}

\makeatletter
\newcommand{\tQ}{\mathpalette\tQ@{}}
\newcommand{\tQ@}[2]{%
  \ooalign{%
    \hfil$\m@th#1 Q$\hfil\cr
    \hidewidth\raisebox{-0.35ex}{$\m@th#1\widetilde{\phantom{Q}}$}\hidewidth\cr
  }%
}
\makeatother

\newcommand{\htQ}{\smash{\widehat{\tQ}}}

\newcommand{\hU}{\widehat{U}}
\newcommand{\hQ}{\widehat{Q}}

\newcommand{\hu}{\widehat{u}}
\newcommand{\hv}{\widehat{v}}
\newcommand{\vv}{vv}
\newcommand{\cc}{cc}
\newcommand{\sss}{ss}
\newcommand{\nn}{nn}
\newcommand{\hvv}{\widehat{\vv}}
\newcommand{\hcc}{\widehat{\cc}}
\newcommand{\hsss}{\widehat{\sss}}
\newcommand{\hc}{\widehat{c}}
\newcommand{\hs}{\widehat{s}}
\newcommand{\homega}{\widehat{\omega}}
\newcommand{\hsigma}{\widehat{\sigma}}
\newcommand{\hgamma}{\widehat{\gamma}}

\newcommand{\utQ}{\underline{\smash{\tQ}}}
\newcommand{\uQ}{\underline{\smash{Q}}}
\newcommand{\uU}{\underline{U}}
\newcommand{\uhtQ}{\underline{\smash{\htQ}}}
\newcommand{\uhQ}{\underline{\smash{\hQ}}}
\newcommand{\uhU}{\underline{\hU}}
\newcommand{\euv}{\epsilon_{uv}}
\newcommand{\ecs}{\epsilon_{cs}}
\newcommand{\esigma}{\epsilon_{\sigma}}
\newcommand{\egamma}{\epsilon_{\gamma}}
\newcommand{\erho}{\epsilon_{\rho}}

\newcommand{\rhoB}{\overline{\rho}}
\newcommand{\gammaB}{\overline{\gamma}}
\newcommand{\omegaB}{\overline{\omega}}
\newcommand{\sigmaB}{\overline{\sigma}}
\newcommand{\hgammaB}{\overline{\hgamma}}

\newcommand{\hsigmaB}{\overline{\hsigma}}
\newcommand{\cB}{\overline{c}}
\newcommand{\tB}{\overline{t}}

\newcommand{\uB}{\overline{u}}
\newcommand{\ux}{\underline{x}}
\newcommand{\ut}{\underline{t}}
\newcommand{\uz}{\underline{z}}
\newcommand{\un}{\underline{n}}
\newcommand{\uu}{\underline{u}}
\newcommand{\uv}{\underline{v}}
\newcommand{\uc}{\underline{c}}
\newcommand{\us}{\underline{s}}
\newcommand{\urho}{\underline{\smash{\rho}}}
\newcommand{\ugamma}{\underline{\smash{\gamma}}}

\newcommand{\usigma}{\underline{\sigma}}
\newcommand{\utB}{\overline{\ut}}
\newcommand{\huB}{\overline{\hu}}
\newcommand{\hcB}{\overline{\hc}}
\newcommand{\hH}{\widehat{H}}
\newcommand{\rerr}{\mu}
\newcommand{\uhu}{\underline{\hu}}
\newcommand{\uhv}{\underline{\hv}}
\newcommand{\uhc}{\underline{\hc}}
\newcommand{\uhs}{\underline{\hs}}
\newcommand{\uhgamma}{\underline{\smash{\hgamma}}}

\newcommand{\uhsigma}{\underline{\hsigma}}
\newcommand{\ope}{(1+O(\epsilon))}
\newcommand{\opr}{(1+O(\rerr))}
\newcommand{\re}{\operatorname{Re}}
\newcommand{\im}{\operatorname{Im}}
\newcommand{\fl}{\operatorname{fl}}

\ifpdf
\hypersetup{
  pdftitle={\TheTitle},
  pdfauthor={\TheAuthors}
}
\fi

\begin{document}

\maketitle

\begin{abstract}
Symmetric and unitary matrices are among the most important classes of
structured matrices admitting efficient and stable eigenvalue algorithms. Notable
examples include structure-preserving variants of the QR algorithm and their
square-root-free counterparts. For unitary matrices, these algorithms were
historically derived from recurrence relations for orthogonal polynomials on the
unit circle. A matrix-based derivation using core chasing was later given in
\cite{aurentz2015a}; this family of algorithms is backward stable, but the
underlying symmetries needed to cast out the square roots were not identified,
and the question of structured backward stability was left open. In this paper we
present a matrix-based derivation of the symmetric unitary core-chasing algorithm
and use it to obtain a square-root-free variant. We give a matrix-based proof of
backward stability and show that strict structured backward stability is
impossible in general. Open-source Fortran code and numerical experiments
demonstrate that the combination of exploiting the symmetry and removing the
square roots reduces computation time without degrading accuracy.
\end{abstract}

\begin{keywords}
unitary matrix, eigenvalues, square-root-free, Pal-Walker-Kahan, core transformations
\end{keywords}

\begin{AMS}
65F15, 65G50, 15A18, 15B10
\end{AMS}

\section{Introduction}
\label{sec1_intro}

This project began with an observation. Among all the different types of
structured matrices that admit fast algorithms for computing eigenvalues,
symmetric tridiagonal matrices and unitary upper Hessenberg matrices possess a
number of very similar algorithms. For both classes there exist fast versions
of Francis' implicitly-shifted QR algorithm \cite{dubrulle1968, gragg1986} and
fast divide-and-conquer algorithms \cite{cuppen1980, gragg1990}. There are also
Krylov methods based on short recurrences \cite{lanczos1950, bohnhorst1993},
bisection methods \cite{parlett1998, bunsegerstner1984}, and Jacobi methods
\cite{jacobi1846, goldstine1959}, though, in general, these apply to full
matrices.

In the case of the QR algorithm, the similarities between these two classes of
matrices run even deeper. For both classes, the recursions used in each QR
iteration are not unique, leading to multiple versions of the ``same''
algorithm. In the case of unitary matrices, many variants have been proposed,
though not all of them are stable \cite{gragg1986, ammar1986, gragg1997,
stewart1998, gragg1999, stewart2001, wang2002, stewart2006, aurentz2015a}.
Additionally, for both classes of matrices there exist square-root-free
versions, which avoid computing the square roots needed to normalize the
similarity transformations. When done correctly, removing the costly
square roots gives an even more efficient algorithm for computing eigenvalues,
without sacrificing backward stability. For symmetric tridiagonal matrices, the
first square-root-free algorithm was proposed by Ortega and Kaiser in 1963
\cite{ortega1963}, and later improved by Pal, Walker, and Kahan in 1968
\cite[Section 8.15]{parlett1998}. Further versions were introduced and their
stability analyzed in \cite{feng1991}.

Developing square-root-free algorithms for unitary upper Hessenberg matrices is
more complicated than for symmetric tridiagonal matrices, since the
transforming matrices are necessarily complex. In the original unitary QR
algorithm by Gragg \cite{gragg1986}, cancellation errors due to the necessary
complex arithmetic made the algorithm unstable. In \cite{gragg1997,gragg1999},
Gragg introduced an alternative formulation of his original algorithm and
showed experimentally that it was stable. This same reformulation also made it
possible to develop a square-root-free unitary QR algorithm, similar to the
symmetric case. In \cite{stewart1998,stewart2001,stewart2006}, Stewart reviewed
several variants of the QR algorithm for unitary upper Hessenberg matrices and
gave a detailed stability analysis of a stabilized recurrence-based
formulation, including square-root-free variants.

In \cite{aurentz2015a}, the authors unified many of the existing unitary QR
algorithms by interpreting them as unitary core-chasing algorithms
\cite{aurentz2018}. This point of view allows the same algorithms to be derived
from a strictly matrix-based analysis, as opposed to one based on recurrence
relations for orthogonal polynomials on the unit circle. It also allows one to
show that, in exact arithmetic, many of the seemingly different variants of the
QR algorithm for unitary upper Hessenberg-like matrices are identical. The
matrix-based analysis further aids in simplifying the proof of backward
stability. However, the additional symmetries needed to derive a
square-root-free unitary core-chasing algorithm were not identified in
\cite{aurentz2015a}, and the question of structured backward stability was not
addressed.

This paper addresses these details, starting in Section~\ref{sec2_uqr}, where we
show that the core-chasing formulation of the unitary QR algorithm contains a
previously unnoticed symmetry. This symmetry is generated by the first fusion
in a single-shift core-chasing step and is preserved, in exact arithmetic, by
a symmetric turnover. We also discuss its relation with the QR algorithm for
symmetric tridiagonal matrices.

In Section~\ref{sec3_nearly} we study what happens when the core
transformations are only nearly-unitary, as occurs when using floating-point
arithmetic. In this setting, the na\"ive symmetric turnover formulas can be
unstable, and one must use a modified formula analogous to Gragg's stabilized
recurrence. We prove that the resulting symmetric turnover is nearly exact for
nearly-unitary data. 

In Section~\ref{sec4_rfuqr} we show how the symmetry in the turnover can be
used to derive a square-root-free unitary core-chasing algorithm and how to
preserve the nearly-unitariness of the core transformations without computing
additional square roots.  

In Section~\ref{sec5_stability} we analyze the floating-point implementation.
We prove that both the symmetric and square-root-free turnovers are stable in a
forward sense for nearly-unitary floating-point input, and then show how this
forward error can be converted into a backward error result for the turnover.
We also show that a strict structured backward error result is impossible in
general.

Finally, in Section~\ref{sec6_experiments} we present numerical experiments
that demonstrate the practical effects of removing square roots, and compare
the symmetric and square-root-free algorithms with LAPACK and the unitary
core-chasing algorithm in \cite{aurentz2015a}.

\section{The Unitary Core-Chasing Algorithm}
\label{sec2_uqr}

The key to the efficiency of the core-chasing algorithms for unitary
upper Hessenberg matrices in \cite{aurentz2015a} is to first factor the matrix
into a product of elementary matrices called \emph{core transformations}.

\subsection{Core transformations}
The core transformation $U_i$ is an $n \times n$ unitary matrix which is equal
to the identity matrix everywhere except for the submatrix at the intersection
of rows and columns $i$ and $i+1$. Apart from a few exceptions, in this paper
we will use core transformations that are complex Givens' rotations, whose
$(i+1,i)$ entry is real and nonnegative. In this case, the core transformation
$U_i$ can be parameterized by a complex number $u_i$ and a nonnegative real
number $v_i$ such that $|u_i|^2 + v_i^2 = 1$,
$$ 
U_i = 
\begin{bmatrix}
\ddots &   &     &                & &              \\[-0.2em]   
       & 1 &     &                & &              \\[-0.6em]   
       &   & u_i & -v_i           & &              \\
       &   & v_i & \phantom{-{}}\uB_i & &          \\[-0.2em]
       &   &     &                & 1 &            \\[-0.6em]
       &   &     &                &   & \ddots
\end{bmatrix}.
$$
Since working with core transformations often involves working directly with
their nontrivial diagonal blocks, we will use the notation $[*]$ to denote any
submatrices needed to perform the operations. For example, we use the
notation $[U_i]$ to signify the $2 \times 2$ submatrix at the intersection of rows
and columns $i$ and $i+1$,
$$
[U_i] = 
\begin{bmatrix}
u_i & -v_i               \\
v_i & \phantom{-{}}\uB_i
\end{bmatrix}.
$$
Similarly, we use $[U_i U_{i+1}]$ to denote the nontrivial $3 \times 3$
diagonal block of the product,
$$
[U_i U_{i+1}] = 
\begin{bmatrix}
u_i & -v_i               &  \\
v_i & \phantom{-{}}\uB_i &  \\
    &                    & 1
\end{bmatrix}
\begin{bmatrix}
1 &     &                    \\
  & u_{i+1} & -v_{i+1}               \\
  & v_{i+1} & \phantom{-{}}\uB_{i+1}
\end{bmatrix}.
$$
A fundamental result, essential to all fast unitary QR algorithms, is that
every $n \times n$ unitary upper Hessenberg matrix $U$ can be factored into a
product of $n$ core transformations $\{U_i\}_{i=1}^n$ as 
$$
U = U_1 \ldots U_{n-1} U_n.
$$ 
The last core transformation $U_n$ is equal to the identity matrix everywhere
except for the last diagonal entry, and is not, strictly speaking, a
rotation 
$$ 
U_n = 
\begin{bmatrix}
\ddots &   &                  \\[-0.4em]
       & 1 &                  \\[-0.4em]
       &   &  u_n  
\end{bmatrix}.
$$
This factorization implies that $U$ can be stored using $O(n)$ data, equivalent
to the linear storage needed for symmetric tridiagonal matrices. Gragg showed
using recurrence relations that, just as in the symmetric case, this
factorization can be used to perform a single step of Francis'
implicitly-shifted QR algorithm in $O(n)$ flops. Since this factorization is
the same one that is used in all core-chasing algorithms \cite{aurentz2018}, it
immediately admits a fast core-chasing algorithm for unitary upper Hessenberg
matrices.

\subsection{Unitary core-chasing}

Given a complex shift $\rho$, $|\rho| = 1$, an iteration of the unitary
core-chasing algorithm begins by constructing a core transformation $Q_1$ such
that $\alpha Q_1e_1 = (U - \rho I)e_1$, $\alpha \neq 0$, and performing the
similarity transformation $Q_1^* U Q_1$,
$$
Q_1^* U Q_1 = Q_1^* U_1 U_2 \ldots U_n Q_1 = Q_1^* U_1 U_2 Q_1 U_3 \ldots U_n.
$$
The last equality holds since $Q_i U_j = U_j Q_i$ whenever $|i-j| > 1$. 

The core transformation $Q_1$ is constructed so that its first column is
parallel to the first column of $U - \rho I$. Since $U-\rho I$ is
upper Hessenberg, its first column has only two nonzero entries, $(U-\rho I)e_1
= (U_1 - \rho I)e_1 = [u_1 - \rho, v_1,0,\ldots]^T$. Such a $Q_1$ can be
constructed as follows,
$$ 
[Q_1] = 
\begin{bmatrix}
 c_1 & -s_1               \\
 s_1 & \phantom{-{}}\cB_1
\end{bmatrix},
$$
where $c_1 = (u_1-\rho)/n_1$, $s_1 = v_1/n_1$ and $n_1 = \sqrt{|u_1-\rho|^2 +
v_1^2}$. The first similarity transformation is then simplified by
\emph{fusing} the product $Q_1^* U_1$ into a new core transformation $P_1$.
After applying this first similarity transform, the matrix $Q_1^* U Q_1$ is no
longer upper Hessenberg due to the factor $Q_1$ on the right, 
$$
Q_1^* U Q_1 = P_1 U_2 Q_1 U_3 \ldots U_n.
$$ 
The rest of the iteration consists of returning this product to
upper Hessenberg form through a sequence of turnovers and similarity
transformations. 

Given a product of three core transformations of the form
$A_i B_{i+1} C_i$, a \emph{turnover} is a refactorization into a product of
three new core transformations of the form $A_{i+1} B_i C_{i+1}$ such that, 
$$
A_i B_{i+1} C_i = A_{i+1} B_i C_{i+1}.
$$ 
When $A_i$, $B_{i+1}$ and $C_i$ are unitary, the turnover can always be
computed stably in $O(1)$ flops by computing the QR factorization of a $3
\times 3$ unitary matrix \cite{aurentz2018}.

The iteration continues by computing a turnover of the product $P_1 U_2 Q_1$,
$$
P_1 U_2 Q_1 = Q_2 \hU_1 P_2,
$$
and performing a similarity transformation with the computed matrix $Q_2$,
$$ 
Q_2^* Q_1^* U Q_1 Q_2 = \hU_1 P_2 U_3 Q_2 U_4 \ldots U_n.
$$
This process is continued a total of $n-1$ times, resulting in the following
product,
$$ 
Q_{n-1}^*\ldots Q_1^* U Q_1 \ldots Q_{n-1} = \hU_1 \ldots \hU_{n-2}P_{n-1} U_n
Q_{n-1}.
$$
Although $U_n$ is not a rotation, the product $P_{n-1} U_n Q_{n-1}$ can be
refactored via a turnover as $Q_n\hU_{n-1}P_n$, where $Q_n$ and $P_n$ have
the same form as $U_n$. Performing a similarity transformation with the matrix
$Q_n$ gives the following, 
$$ 
Q_{n}^* \ldots Q_1^* U Q_1 \ldots Q_{n} = \hU_1 \hU_2 \ldots \hU_{n-1} P_n Q_n. 
$$
Thus, a single iteration of the unitary core-chasing algorithm directly
computes the factorization for the resulting unitary upper Hessenberg matrix
$\hU$,
$$ 
Q^* U Q = \hU_1 \hU_2 \ldots \hU_{n-1} \hU_n = \hU, 
$$
where $Q = Q_1 \ldots Q_{n}$ and $\hU_n = P_n Q_n$.

\subsection{Symmetry in the unitary core-chasing algorithm}

The majority of the work in a single unitary core-chasing iteration comes from
computing the $O(n)$ turnovers. In the algorithm presented in
\cite{aurentz2015a} this is done via a QR factorization of a $3 \times 3$
unitary matrix. While this can be done stably using a single square root, it
requires explicitly forming most of the $3 \times 3$ submatrix and multiplying
these entries by two rotations. Here it will be shown that the arithmetic
needed to perform a turnover can be reduced by taking advantage of a previously
unknown symmetry present in the core transformations. This symmetry comes from
the special form of the initial transforming matrix $Q_1$ and is maintained
throughout the entire iteration. 

Recall that given a unimodular shift $\rho$, the core transformation $Q_1$ is
constructed so that its first column is parallel to the first column of $U_1-\rho
I$, 
$$ 
[Q_1] = 
\begin{bmatrix}
 c_1 & -s_1               \\
 s_1 & \phantom{-{}}\cB_1
\end{bmatrix}
=
\begin{bmatrix}
 (u_1-\rho)/n_1 & -v_1/n_1               \\
 v_1/n_1 & \phantom{-{}}(\uB_1-\rhoB)/n_1
\end{bmatrix},
$$
where $n_1 = \sqrt{|u_1-\rho|^2 + v_1^2}$. Thus, the parameters that generate
$Q_1$ are shifted and scaled versions of the parameters that generate $U_1$.
The initial similarity transform requires the fusion of the product $Q_1^*
U_1$. Given that both matrices are core transformations, computing the fusion
requires computing the product of their initial $2\times 2$ diagonal
blocks.

To see the structure in the initial fusion we first separate the diagonal
shifts in $[Q_1]$,
$$ 
[Q_1] = 
\frac{1}{n_1}
\left(
\begin{bmatrix}
 u_1 & -v_1 \\
 v_1 & \phantom{-{}}\uB_1
\end{bmatrix}
+
\begin{bmatrix}
 -\rho &            \\
  & -\rhoB
\end{bmatrix}
\right).
$$
Then, we compute the products of the initial $2 \times 2$ diagonal blocks
\begin{equation}\label{eq:fuse1}
\begin{split}
[Q_1^*U_1]
&= 
\frac{1}{n_1}
\left(
\begin{bmatrix}
 \phantom{-{}}\uB_1 & v_1\\ 
 -v_1                      & u_1 
\end{bmatrix}
+
\begin{bmatrix}
 -\rhoB &            \\
  & -\rho
\end{bmatrix}
\right)
\begin{bmatrix}
 u_1 & -v_1 \\
 v_1 & \phantom{-{}}\uB_1
\end{bmatrix}\\
&= 
\begin{bmatrix}
 -\rhoB &            \\
  & -\rho
\end{bmatrix}
\frac{1}{n_1}
\left(
\begin{bmatrix}
 -\rho &            \\
  & -\rhoB
\end{bmatrix}
+
\begin{bmatrix}
 u_1 & -v_1 \\
 v_1 & \phantom{-{}}\uB_1
\end{bmatrix}
\right)\\
&=
\begin{bmatrix}
 -\rhoB &            \\
  & -\rho
\end{bmatrix}
\begin{bmatrix}
 c_1 & -s_1 \\
 s_1 & \phantom{-{}}\cB_1
\end{bmatrix}
=  
\begin{bmatrix}
-\rhoB c_1 & \phantom{-{}}\rhoB s_1 \\
-\rho s_1 & -\rho\cB_1
\end{bmatrix}
=  [\tQ_1].
\end{split}
\end{equation}
Now we can rewrite the first similarity transformation and fusion as follows,
$$
Q_1^* U Q_1 = \tQ_1 U_2 Q_1 U_3 \ldots U_n.
$$
The matrix $\tQ_1$ is still a core transformation but the off-diagonal entries
of \([\tQ_1]\) are no longer strictly real. However, their phase is $-\rho$
and the connection to \( Q_1 \) remains via the parameters \(c_1\) and \(s_1\). 

\subsubsection{Symmetry in the turnover}

In order for this symmetry to be useful it must be preserved by the turnover.
If that were the case, there would exist core transformations $Q_2$, $\hU_1$
and $\tQ_2$ such that
$$
\tQ_1 U_2 Q_1 = Q_2 \hU_1 \tQ_2.
$$
Then, applying the similarity transformation defined by $Q_2$ would give the
next step in the iteration,
$$
Q_2^* Q_1^* U Q_1 Q_2 = \hU_1 \tQ_2 U_3 Q_2 U_4 \ldots U_n,
$$
and the product $\tQ_2 U_3 Q_2$ would have the same structure as the previous
turnover, thus preserving the symmetry. 

Unfortunately, the existence of the shift $\rho$ in $\tQ_1$ breaks the symmetry
just enough to require the introduction of an extra parameter.
Theorem~\ref{thm:symto} shows that with the addition of this new parameter, the
symmetry in the turnover is preserved throughout the entire core-chasing step.

\begin{theorem}[Symmetric Turnover]\label{thm:symto}
Given unimodular complex numbers $\rho$ and $\gamma$, complex numbers
$u$ and $c$, and nonnegative real numbers $v$ and $s$ such
that $|u|^2 + v^2 = 1$ and $|c|^2 + s^2 = 1$, define the
following, 
\begin{equation*}
\begin{array}{l}
\begin{array}{lll}
p &=& -\rho(c + \gammaB\cB u), \\
n &=& \sqrt{|p|^2 + v^2},
\end{array} \\
\begin{array}{llll}
\mathrm{if}\ n \neq 0: & & &\\
& \hc &=& p/n,\\
& \hs &=& v/n,\\
& \hu &=& \rhoB(s^2u - \gamma c^2),\\
& \hv &=& s n\\
\mathrm{else}: & & &\\
& \hc &=& 0,\\
& \hs &=& 1,\\
& \hu &=& \rhoB u,\\
& \hv &=& 0,\\
\end{array} \\
\begin{array}{lll}
\hgamma &=& -\rhoB\gamma.
\end{array}
\end{array}
\end{equation*}
Then, the following products are equivalent,
\[
\begin{bmatrix}
-\rhoB \gamma c & \phantom{-{}}\rhoB s  & \\
-\rho s & -\rho\gammaB \cB & \\
     &                & 1 
\end{bmatrix}
\begin{bmatrix}
 1    &                & \\  
 &  u & -v           \\
 &  v & \phantom{-{}}\uB
\end{bmatrix}
\begin{bmatrix}
 c & -s           & \\
 s & \phantom{-{}}\cB \\
     &                & 1 
\end{bmatrix}
= 
\begin{bmatrix}
 1    &                & \\  
 & \hc & -\hs           \\
 & \hs & \phantom{-{}}\hcB
\end{bmatrix}
\begin{bmatrix}
 \hu & -\hv           & \\
 \hv & \phantom{-{}}\huB \\
     &                & 1 
\end{bmatrix}
\begin{bmatrix}
 1    &                & \\  
 & -\rhoB\hgamma\hc & \phantom{-{}}\rhoB\hs           \\
 &  -\rho \hs & -\rho\hgammaB\hcB
\end{bmatrix}.
\]
\end{theorem}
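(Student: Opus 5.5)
The plan is to verify the identity by direct comparison of the two $3\times 3$ products, using unitarity to avoid expanding all nine entries. Write $L$ for the left-hand product and $R$ for the right-hand product. Under the hypotheses $|u|^2+v^2=1$, $|c|^2+s^2=1$ and $|\rho|=|\gamma|=1$, every factor is unitary.

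First I compute determinants. The first factor on the left has determinant $|\gamma|^2|c|^2+s^2=1$, and the other two are rotations with determinant $1$, so $\det L=1$. On the right, the first two factors are rotations provided $|\hc|^2+\hs^2=1$ and $|\hu|^2+\hv^2=1$. The third factor then has determinant $|\hgamma|^2|\hc|^2+\hs^2=1$ for the same reason, so $\det R=1$. Here $|\hc|^2+\hs^2=1$ is immediate from $n^2=|p|^2+v^2$. For $|\hu|^2+\hv^2=1$, expand $n^2=|c+\gammaB\cB u|^2+v^2$ and $|s^2u-\gamma c^2|^2$, then use $v^2=1-|u|^2$ and $s^2=1-|c|^2$. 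The cross terms $\re(\gamma c^2\bar u)$ appear with opposite signs, and the sum collapses to $1$. This algebraic identity is the step I expect to be the main computational obstacle, though it is routine.

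Second, I compare first columns. Computing the first two factors of $L$ gives
\[
\begin{bmatrix} -\rhoB\gamma c & \rhoB s u & -\rhoB s v\\ -\rho s & -\rho\gammaB\cB u & \rho\gammaB\cB v\\ 0 & v & \uB\end{bmatrix}.
\]
Multiplying by the first column $(c,s,0)^T$ of the last factor gives
\[
Le_1=\bigl(\rhoB(s^2u-\gamma c^2),\; -\rho s(c+\gammaB\cB u),\; vs\bigr)^T=(\hu,\,sp,\,sv)^T.
\]
On the right, the last factor fixes $e_1$, so $Re_1=(\hu,\hc\hv,\hs\hv)^T=(\hu,sp,sv)^T$ when $n\neq0$. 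Next I compare third rows. On the left, $e_3^TL=(0,v,\uB)\,[C]=(vs,\,v\cB,\,\uB)$. On the right, I multiply $(0,\hs,\hcB)$ through $\hU_1$ and the final factor. Substituting the definitions and using $\hgamma=-\rhoB\gamma$ gives a short check, again using $n^2=|p|^2+v^2$, that this row equals $(vs,v\cB,\uB)$.

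Third, I use a rigidity argument to close. Two unitary matrices with the same first column, the same last row and the same determinant coincide whenever the common $(3,1)$ entry has modulus less than $1$. Indeed, the middle column must be orthogonal to the first and third columns, and its third entry and norm are already fixed. This determines the middle column up to a unimodular scalar, the remaining two entries of the third column are then fixed by orthogonality, and the determinant pins down the scalar. The degenerate situation $|sv|=1$ forces $c=0$ and $u=0$, so $p=0$ and $n=v=1$; I would check this case directly.

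Finally, in the case $n=0$ we have $v=0$ and $p=0$, so the middle factor on the left is $\operatorname{diag}(1,u,\uB)$. A direct $3\times3$ multiplication with $\hc=0$, $\hs=1$, $\hu=\rhoB u$ and $\hv=0$ then confirms the identity. This relies on $c=-\gammaB\cB u$, which together with $|u|=1$ gives $|c|=|c|$ consistently.
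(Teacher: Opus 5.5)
Your proof is correct, but it takes a different route from the paper at the key step. The paper expands $H$ (your $L$) and forms $K=\hQ^{*}H\htQ^{*}$. By construction, $K$ has first row $(\hu,-\hv,0)$ and first column $(\hu,\hv,0)^T$, and the paper then uses unitarity and $\det K=1$ to force $K=\hU$. That rigidity requires $\hv\neq0$, i.e.\ $s\neq0$, so the case $s=0$ is handled by a continuity argument.

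You instead match the first column $(\hu,sp,sv)^T$ and the \emph{third} row $(vs,v\cB,\uB)$. Your rigidity therefore fails only at the single explicit configuration $sv=1$, which you check by hand, and $s=0$ needs no limiting argument. The price is the third-row verification for $R$, which is where the real algebra sits: it uses both $|u|^2+v^2=1$ and $|c|^2+s^2=1$, not only the definition of $n$. Matching first rows, as the paper effectively does, is trivial but reintroduces the restriction $s\neq0$.

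Your other ``main obstacle'' can be skipped. Since $L$ is unitary and $Le_1=(\hu,sp,sv)^T$, you get $|\hu|^2+\hv^2=|\hu|^2+s^2(|p|^2+v^2)=\|Le_1\|_2^2=1$ at once.

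One point to tighten: your column-by-column justification of the rigidity lemma does not work as written. Recovering the top two entries of the third column from orthogonality is a $2\times2$ linear system whose determinant has modulus $|\uB|$. It therefore degenerates when $u=0$, a case your exception $|sv|=1$ does not exclude. The lemma itself is true, and a clean proof goes as follows.

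Set $C=LR^{*}$; it is unitary with $\det C=1$. From $Le_1=Re_1$ you get $C(Le_1)=Le_1$, and from $e_3^TL=e_3^TR$ you get $e_3^TC=e_3^T$, hence $Ce_3=e_3$. If $|sv|<1$, then $Le_1$ and $e_3$ are linearly independent. So $C$ is the identity on a two-dimensional subspace and acts as multiplication by some unimodular $\lambda$ on its orthogonal complement. Then $\lambda=\det C=1$ gives $C=I$, i.e.\ $L=R$.
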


\begin{proof}
Define \(H\) and \(\hH\) as follows
\[
H =
\begin{bmatrix}
-\rhoB \gamma c & \phantom{-{}}\rhoB s  & \\
-\rho s & -\rho\gammaB \cB & \\
     &                & 1 
\end{bmatrix}
\begin{bmatrix}
 1    &                & \\  
 &  u & -v           \\
 &  v & \phantom{-{}}\uB
\end{bmatrix}
\begin{bmatrix}
 c & -s           & \\
 s & \phantom{-{}}\cB \\
     &                & 1 
\end{bmatrix}
=
\begin{bmatrix}
\rhoB(s^2u-\gamma c^2) & \rhoB \gamma(c+\gammaB\cB u)s & -\rhoB vs\\
-\rho (c+\gammaB\cB u)s & \rho(s^2 - \gammaB\cB^2u) & \rho v\gammaB\cB \\
vs & v\cB & \uB
\end{bmatrix}.
\]
\[ 
\hH = 
\begin{bmatrix}
 1    &                & \\  
 & \hc & -\hs           \\
 & \hs & \phantom{-{}}\hcB
\end{bmatrix}
\begin{bmatrix}
 \hu & -\hv           & \\
 \hv & \phantom{-{}}\huB \\
     &                & 1 
\end{bmatrix}
\begin{bmatrix}
 1    &                & \\  
 & -\rhoB\hgamma\hc & \phantom{-{}}\rhoB\hs           \\
 &  -\rho \hs & -\rho\hgammaB\hcB
\end{bmatrix}
=
\begin{bmatrix}
\hu
&
\rhoB\hgamma\hc\hv
&
-\rhoB\hs\hv
\\
\hc\hv
&
\rho\hs^2-\rhoB\hgamma\hc^2\huB
&
\rhoB\hc\hs\huB+\rho\hgammaB\hcB\hs
\\
\hs\hv
&
-\rhoB\hgamma\hc\hs\huB-\rho\hcB\hs
&
\rhoB\hs^2\huB-\rho\hgammaB\hcB^2
\end{bmatrix}.
\]
It is enough to show that \(H=\widehat H\). Using \( p=-\rho(c+\gammaB\cB u)
\) and \( \hgamma=-\rhoB\gamma, \) together with \(|\rho|=|\gamma|=1\) and \(s^2
= 1 - c\cB\), \(H\) becomes
\[
H= 
\begin{bmatrix}
\rhoB(s^2u-\gamma c^2) & \rhoB \hgamma ps & -\rhoB vs\\
ps & \rho(s^2 - \gammaB\cB^2u) & -v\cB\hgammaB \\
vs & v\cB & \uB
\end{bmatrix}.
\]
In the case \(n\neq0\), define \(\hQ\) and \(\htQ\) as follows,
\[ 
\hQ = 
\begin{bmatrix}
 1    &                & \\  
 & \hc & -\hs           \\
 & \hs & \phantom{-{}}\hcB
\end{bmatrix},
\qquad
\htQ = 
\begin{bmatrix}
 1    &                & \\  
 & -\rhoB\hgamma\hc & \phantom{-{}}\rhoB\hs \\
 &  -\rho \hs & -\rho\hgammaB\hcB
\end{bmatrix}.
\]
The matrices \(\hQ\) and \(\htQ\) are designed to create zeros in the
$(3,1)$ and $(1,3)$ entries of $H$, respectively, so that 
\[
K = \hQ^* H \htQ^* = 
\begin{bmatrix}
\hu & -\hv &  \\
\hv & \phantom{-{}}* & *  \\
    & \phantom{-{}}* & * 
\end{bmatrix}.
\]
Assume that $s \neq 0$. Then $\hv \neq 0$ and, since $K$ is unitary,
$K(3,2) = K(2,3) = 0$. This implies that $K(2,2) = \huB$; since $\det(H) =
\det(K) = 1$, we also have $K(3,3) = 1$. Therefore,
\[
K = 
\begin{bmatrix}
 \hu & -\hv           & \\
 \hv & \phantom{-{}}\huB \\
     &                & 1 
\end{bmatrix},
\]
and 
\[
H = 
\begin{bmatrix}
 1    &                & \\  
 & \hc & -\hs           \\
 & \hs & \phantom{-{}}\hcB
\end{bmatrix}
\begin{bmatrix}
 \hu & -\hv           & \\
 \hv & \phantom{-{}}\huB \\
     &                & 1 
\end{bmatrix}
\begin{bmatrix}
 1    &                & \\  
 & -\rhoB\hgamma\hc & \phantom{-{}}\rhoB\hs           \\
 &  -\rho \hs & -\rho\hgammaB\hcB
\end{bmatrix}
= \hH.
\]
For $s = 0$, notice that for $n \neq 0$, the entries of $H$ and $\hH$ are
continuous in $s$ as long as the phase of $c$ is unchanged,
\[
\lim_{s \rightarrow 0} |c| = 1, \qquad \frac{c}{|c|} = \textrm{constant}.
\] 
Since $H-\hH = 0$ for $s \neq 0$, by continuity, $\lim_{s \rightarrow 0} (H-\hH) = 0$.  

In the case \(n=0\), use $s^2 = 1 - c\cB$ to rewrite $H$ as,
\[
H= 
\begin{bmatrix}
\rhoB(u-\hgamma c p) & \rhoB \hgamma ps & -\rhoB vs\\
ps & \rho + p\cB  & -v\cB\hgammaB \\
vs & v\cB & \uB
\end{bmatrix}.
\]
Now $n = 0$ implies that \(p = v = 0\) and the condition \(|u|^2+v^2=1\)
implies \(|u|=1\), so the expression for \(H\) reduces to
\[
H=
\begin{bmatrix}
\rhoB u& & \\
 &\rho& \\
 & &\uB
\end{bmatrix}.
\]
By construction
\[
\widehat c=0,
\qquad
\widehat s=1,
\qquad
\widehat u=\rhoB u,
\qquad
\widehat v=0,
\]
which gives
\[
\widehat H
=
\begin{bmatrix}
\hu & & \\
 &\rho & \\
 & & \rhoB\huB
\end{bmatrix}
=
\begin{bmatrix}
\rhoB u& & \\
 &\rho& \\
 & &\uB
\end{bmatrix}
= H.
\]
This completes the proof. 
\end{proof}

Theorem~\ref{thm:symto} shows that the product $\tQ_1 U_2 Q_1$ can be
refactored into a new set of core transformations, 
$$
\tQ_1 U_2 Q_1 = Q_2 \hU_1 \tQ_2,
$$
where $\tQ_2$ is the same as $Q_2$ except for the unimodular factors $\rho$ and
$\hgamma$, and computing a similarity transform with the matrix $Q_2$ preserves
the structure of the initial similarity transform, only one row lower and one
column to the right, 
$$
Q_2^* Q_1^* U Q_1 Q_2 = \hU_1 \tQ_2 U_3 Q_2 U_4 \ldots U_n.
$$
Repeating this process $n-1$ times completes a single step of the unitary
core-chasing algorithm,
$$ 
Q^* U Q = Q_{n}^* \ldots Q_1^* U Q_1 \ldots Q_{n} = \hU_1 \hU_2 \ldots \hU_{n-1}
\hU_n = \hU. 
$$

\subsection{Connection to symmetric tridiagonal QR}
The formulas in Theorem~\ref{thm:symto} are possible thanks to the special
structure of unitary upper Hessenberg matrices. Symmetric tridiagonal matrices
also possess a similar structure that allows for the efficient implementation of
Francis' single-shift QR algorithm. Even though the algorithm is not a
core-chasing algorithm in the sense of \cite{aurentz2018}, the structure is
remarkably similar. As an illustrative example, consider the following $3 \times
3$ symmetric tridiagonal matrix, 
\[
T = 
\begin{bmatrix}
a_1 & b_1 &    \\
b_1 & a_2 & b_2\\
    & b_2 & a_3
\end{bmatrix}.
\]
Given a real shift $\rho$, the QR iteration begins by applying the similarity
transform $Q_1^T T Q_1$ where $Q_1$ is the following core transformation,
\[
Q_1 = 
\begin{bmatrix}
c_1 & -s_1 &    \\
s_1 & \phantom{-{}}c_1 & \\
    &  & 1
\end{bmatrix},
\]
with $n_1 = \sqrt{(a_1-\rho)^2 + b_1^2}$, $c_1 = (a_1-\rho)/n_1$ and $s_1 =
b_1/n_1$. The effect of this similarity transform is to create a nonzero 
\emph{bulge} in the $(3,1)$ and $(1,3)$ entries of $Q_1^T TQ_1$,  
\[
Q_1^T TQ_1 = 
\begin{bmatrix}
* & * & + \\
* & * & * \\
+ & * & * 
\end{bmatrix}.
\]
The algorithm proceeds by constructing a core transformation $Q_2$ that 
removes the bulge and returns the matrix to tridiagonal form,
\[
Q_2^TQ_1^T TQ_1Q_2 = 
\begin{bmatrix}
\widehat{a}_1 & \widehat{b}_1 &               \\
\widehat{b}_1 & \widehat{a}_2 & \widehat{b}_2 \\
              & \widehat{b}_2 & \widehat{a}_3
\end{bmatrix}
= \widehat{T}.
\]
Moving $Q_2$ over to $\widehat{T}$ gives the equivalent of a turnover for symmetric
tridiagonal matrices,
\[
Q_1^T TQ_1 = Q_2\widehat{T}Q_2^T.
\]
The following formulas can be used to compute the entries of $Q_2$ and the
first row and column of $\widehat{T}$ in exact arithmetic\footnote{See
\cite{parlett1998} for numerically stable alternatives.},
\[
\begin{array}{lll}
p_2 &=& c_1(a_2-\rho)-s_1b_1,\\
n_2 &=& \sqrt{p_2^2+b_2^2},\\
c_2 &=& p_2/n_2,\\
s_2 &=& b_2/n_2,\\
\widehat a_1 &=& c_1^2a_1+2c_1s_1b_1+s_1^2a_2,\\
\widehat b_1 &=& s_1 n_2.
\end{array}
\]
As we can see, these formulas are similar in structure to those used in the
symmetric turnover for unitary matrices, with the main differences being the
need for complex arithmetic and the phase-tracking variable $\gamma$ in the
unitary case. 

\section{Nearly-Unitary Matrices}
\label{sec3_nearly}

Although the results of Theorem~\ref{thm:symto} allow us to compute the
turnover efficiently in exact arithmetic, the current formulas are not
stable and do not permit the development of a square-root-free symmetric
turnover.
The instabilities come from the fact that the floating-point representations of
the parameters $u$ and $v$ can no longer be guaranteed to satisfy $|u|^2 + v^2
= 1$ exactly, and from the possibility of cancellation when computing the
turnover in floating-point arithmetic. 

Similar numerical instabilities were present in Gragg's first unitary QR
algorithm \cite{gragg1986} based on recurrence relations for Szeg\H{o}
polynomials. In \cite{gragg1999}, Gragg introduced a new change of variables
for these relations that appeared to remove these instabilities and also
allowed for a square-root-free implementation, though no proof was given. A
proof of stability of these recurrence-based algorithms was given by Stewart in
\cite{stewart2006}. 

In this and the following sections, we will show that a similar change of
variables to that found in \cite{gragg1999} can be used to stabilize the
symmetric turnover and to develop a square-root-free implementation. We will
also show that these versions of the symmetric turnover are stable when applied
to matrices that are only nearly-unitary in floating-point arithmetic.  

\subsection{A change of variables}
In its current form, the symmetric turnover is not stable and does not allow
for a square-root-free implementation. The main impediment is the formula for
$p$,
\[
p = -\rho(c + \gammaB\cB u).
\] 
When computed in floating-point arithmetic, the product $\gammaB \cB u$
followed by the addition with $c$ is prone to cancellation as the following
example illustrates.

\begin{example}[Instability of the Symmetric Turnover Before Changing Variables]
\label{ex:p-instability}
Let $\rerr=2^{-53}$ denote the unit roundoff for double precision and let
$\fl(\cdot)$ denote rounding to nearest. Define

$$
a=\fl\left(\frac{\sqrt{3}}{2}\right)
  =0.86602540378443860
$$

and consider the floating-point input parameters

$$
u=i,\qquad
v=0,\qquad
c=\gamma=a-\frac{1}{2}i,\qquad
s=0,\qquad
\rho=1.
$$

Since

$$
|u|^2+v^2=1
$$

and

$$
|c|^2=|\gamma|^2
=1-1.1102230246251565\times10^{-16}
=1-\rerr,
$$

the input parameters are unitary to working precision.  The corresponding input
product is diagonal and satisfies

$$
H=\tQ UQ
=
\begin{bmatrix}
 i & 0 & 0\\
 0 & 1 & 0\\
 0 & 0 & -i
\end{bmatrix}
+O(\rerr).
$$

%

Now compute the symmetric turnover in floating-point arithmetic using the
formulas in Theorem~\ref{thm:symto}.  The first intermediate quantity is

$$
p=-\rho\left(c+\gammaB\cB u\right).
$$

Since $\gamma=c$, floating-point evaluation gives

$$
\gamma c
=
\left(\frac12-\rerr\right)-ai
\qquad
\mathrm{and}
\qquad
\gammaB\cB u
=
-a+\left(\frac12-\rerr\right)i.
$$

Since

$$
c=a-\frac12 i,
\qquad
p=\rerr i.
$$

It follows that

$$
n=\sqrt{|p|^2+v^2}=\rerr,
\qquad
\hc=\frac{p}{n}=i,
\qquad
\hs=\frac{v}{n}=0.
$$

The remaining output parameters are

$$
\hu=\rhoB(s^2u-\gamma c^2)
   =-\gamma c^2
   =i+O(\rerr),
\qquad
\hv=sn=0,
\qquad
\hgamma=-\rhoB\gamma=-\gamma.
$$

The corresponding computed output product satisfies

$$
\hH=\hQ\hU\htQ
=
\begin{bmatrix}
i & 0 & 0\\
0 & \displaystyle\frac12+\frac{\sqrt3}{2}i & 0\\
0 & 0 & \displaystyle-\frac{\sqrt3}{2}-\frac12i
\end{bmatrix}
+O(\rerr).
$$

Consequently,

$$
\hH-H
=
\begin{bmatrix}
0&0&0\\
0&
-\displaystyle\frac12+\frac{\sqrt3}{2}i
&0\\
0&0&
-\displaystyle\frac{\sqrt3}{2}+\frac12i
\end{bmatrix}
+O(\rerr).
$$

Both nonzero diagonal entries of the leading term have modulus one, and hence

$$
\|\hH-H\|_2=1+O(\rerr).
$$

%
%

Thus, even though the input parameters represent core transformations that are
unitary to working precision, evaluation of the symmetric turnover in
Theorem~\ref{thm:symto} can produce an $O(1)$ error in the resulting product.
\end{example}

In addition to the possibility of cancellation, the square-root-free turnover
that we develop in the next section requires computing $\hc^2$ directly from
$c^2$, something the current formula for $p$ does not easily allow. 

Fortunately, both of these problems are avoidable if we make a few changes of
variables. The first change is to express $c$ in polar form, $c = |c|\sigma$.
Applying this change to $p$ and factoring out $\sigma$ gives the following,
\[
p = -\rho|c|\sigma(1 + \gammaB\sigmaB^2 u).
\] 
We then introduce the additional variables $t$ and $z$, defined by
\[
t = \gammaB\sigmaB^2 u, \qquad
z = 1+t,
\]
which allows us to rewrite $p$ as,
\[
p = -\rho|c|\sigma z.
\] 
The expression for $t$ uses only multiplication, which aids stability and
the development of a square-root-free version. The formula for $z$, however, 
can still suffer cancellation when $t \approx -1$. Using another idea from
\cite{gragg1999}, we can derive an alternative formula for $z$ by multiplying by
$(1-\tB)$,
\[
(1-\tB)z = (1-\tB)(1+t) = 1-|t|^2 + (t-\tB) = v^2 + 2\im(t) i, 
\]
where we used $1-|t|^2 = 1-|u|^2 = v^2$. This gives the following alternatives
for computing $z$,
\[
z = 
\begin{cases}
1+t, & \re(t) \geq 0,\\
\frac{v^2 + 2\im(t)i}{1-\tB}, & \re(t) < 0.
\end{cases}
\]
The branch conditions are chosen so that there is never cancellation in $(1+t)$
or $(1-\tB)$. Gragg suggested without proof that this alternative formulation
for $z$ was stable because it uses $v$ in the case $t \approx -1$ which implies
$|u| \approx 1$ and $v \approx 0$. As we will see later, the reason for the
improved stability is more subtle.

For now we continue rewriting the symmetric turnover formulas using $t$, $z$ and
the polar decompositions $c = |c|\sigma$ and $\hc = |\hc|\hsigma$: 
\[
\begin{array}{lll}
n &=& \sqrt{|p|^2 + v^2} = \sqrt{|c|^2|z|^2 + v^2},\\
|\hc| &=& |p|/|n| = |c||z|/n,\\
\hsigma &=& \hc/|\hc| = p/|p| = -\rho\sigma z/|z|,\\
\hs &=& v/n,\\
\hu &=& \rhoB(s^2u - \gamma\sigma^2 |c|^2),\\
\hv &=& s n,\\
\hgamma &=& -\rhoB \gamma.
\end{array}
\]
Notice that $p$ is no longer explicitly computed, since we can update $|\hc|$
and $\hsigma$ directly using $z$ and the other input variables. 

Let $\tQ$, $U$ and $Q$ be inputs of the symmetric turnover, after applying the 
preceding change of variables,
\[
\tQ = 
\begin{bmatrix}
-\rhoB \gamma c \sigma & \phantom{-{}}\rhoB s  & \\
-\rho s & -\rho\gammaB c\sigmaB & \\
     &                & 1 
\end{bmatrix},
\qquad
U = 
\begin{bmatrix}
 1    &                & \\  
 &  u & -v           \\
 &  v & \phantom{-{}}\uB
\end{bmatrix},
\qquad
Q = 
\begin{bmatrix}
 c\sigma & -s           & \\
 s & \phantom{-{}}c\sigmaB \\
     &                & 1 
\end{bmatrix},
\]
and $\hQ$, $\hU$ and $\htQ$ be the corresponding outputs,
\[
\hQ = 
\begin{bmatrix}
 1    &                & \\  
 & \hc\hsigma & -\hs           \\
 & \hs & \phantom{-{}}\hc\hsigmaB
\end{bmatrix},
\qquad 
\hU = 
\begin{bmatrix}
 \hu & -\hv           & \\
 \hv & \phantom{-{}}\huB \\
     &                & 1 
\end{bmatrix},
\qquad
\htQ = 
\begin{bmatrix}
 1    &                & \\  
 & -\rhoB\hgamma\hc\hsigma & \phantom{-{}}\rhoB\hs           \\
 &  -\rho \hs & -\rho\hgammaB\hc\hsigmaB
\end{bmatrix},
\]
where the values of $|c|$ and $|\hc|$ have been replaced by $c$ and $\hc$,
respectively.  Corollary~\ref{cor:symto} shows how to compute the symmetric
turnover using the revised formulas.  
\begin{corollary}[Symmetric Turnover]\label{cor:symto}
Given unimodular complex numbers $\rho$, $\gamma$ and $\sigma$, a complex number $u$,
and nonnegative real numbers $v$, $c$ and $s$ such that $|u|^2 + v^2 = 1$ and
$c^2 + s^2 = 1$, define the following, 
\begin{equation*}
\begin{array}{l}
\begin{array}{lll}
t &=& \gammaB\sigmaB^2 u,
\end{array} \\
\begin{array}{lll}
z &=& 
\begin{cases}
1+t, & \re(t) \geq 0,\\
\frac{v^2 + 2\im(t)i}{1-\tB}, & \re(t) < 0,
\end{cases}
\end{array} \\
\begin{array}{lll}
n &=& \sqrt{c^2|z|^2 + v^2},
\end{array} \\
\begin{array}{llll}
\mathrm{if}\ n \neq 0: & & &\\
& \hc &=& c|z|/n,\\
& \hsigma &=& -\rho \sigma z/|z|,\\
& \hs &=& v/n,\\
& \hu &=& \rhoB(s^2u - \gamma\sigma^2c^2),\\
& \hv &=& s n\\
\mathrm{else}: & & &\\
& \hc &=& 0,\\
& \hsigma &=& 1,\\
& \hs &=& 1,\\
& \hu &=& \rhoB u,\\
& \hv &=& 0,\\
\end{array} \\
\begin{array}{lll}
\hgamma &=& -\rhoB\gamma.
\end{array}
\end{array}
\end{equation*}
Then, 
\[
\tQ U Q = \hQ \hU \htQ.
\]
\end{corollary}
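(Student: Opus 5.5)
The plan is to reduce Corollary~\ref{cor:symto} to Theorem~\ref{thm:symto} by substitution. Set $c' = c\sigma$ as the complex parameter of Theorem~\ref{thm:symto}, keep $u,v,s,\rho,\gamma$, and note that $|c'|^2+s^2 = c^2+s^2=1$. With this choice the matrices $\tQ$, $U$, $Q$ of the corollary are exactly the three input factors in Theorem~\ref{thm:symto}. Likewise, the output matrices $\hQ$, $\hU$, $\htQ$ coincide with the theorem's outputs once we identify $\hc' = \hc\hsigma$. So it suffices to show that the corollary's formulas produce $\hc\hsigma = \hc'$, $\hs = \hs'$, $\hu=\hu'$, $\hv=\hv'$ and $\hgamma=\hgamma'$, where primes denote the quantities defined in Theorem~\ref{thm:symto} for the data $(\rho,\gamma,u,v,c',s)$.

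The first key step, which I expect to be the only real point, is to verify that both branches of $z$ equal $1+t$ in exact arithmetic. Since $|\gamma|=|\sigma|=1$, we have $|t|=|u|$, so $1-|t|^2=v^2$. Hence $(1-\tB)(1+t)=1-|t|^2+t-\tB=v^2+2\im(t)i$. When $\re(t)<0$ we have $1-\tB\neq 0$, so division is legitimate and the second branch also gives $z=1+t$.

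Next I compute $p' = -\rho(c'+\gammaB\,\overline{c'}\,u) = -\rho c\sigma(1+\gammaB\sigmaB^2u) = -\rho c\sigma z$. This gives $|p'| = c|z|$, hence $n' = \sqrt{c^2|z|^2+v^2} = n$, and the two case splits $n\neq0$ and $n=0$ coincide. In the case $n\neq 0$:
\begin{itemize}
\item $\hs = v/n = \hs'$ and $\hv = sn = \hv'$;
\item $\hu = \rhoB(s^2u-\gamma\sigma^2c^2) = \rhoB(s^2u-\gamma c'^2) = \hu'$;
\item $\hgamma$ is defined identically in both results.
\end{itemize}
For $\hc\hsigma$ with $z\neq0$, we get $\hc\hsigma = (c|z|/n)(-\rho\sigma z/|z|) = -\rho c\sigma z/n = p'/n = \hc'$.

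A small obstacle is the degenerate subcase $n\neq0$ but $z=0$ (or $c=0$), where $\hsigma$ as written involves $z/|z|$. There $p'=0$, so $\hc'=0$, and $\hc = c|z|/n = 0$. Therefore $\hc\hsigma = 0$ for any unimodular choice of $\hsigma$, and the output matrices are unchanged; I would remark that $\hsigma$ may be taken as any unimodular number in that case. In the case $n=0$ the corollary sets $\hc=0$, $\hsigma=1$, $\hs=1$, $\hu=\rhoB u$, $\hv=0$, giving $\hc\hsigma=0=\hc'$ and matching the else-branch of Theorem~\ref{thm:symto}.

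In every case the output factors are identical to those of Theorem~\ref{thm:symto}, and that theorem yields $\tQ U Q = \hQ\hU\htQ$.
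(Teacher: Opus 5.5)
Your reduction is the one the paper uses. You substitute $c\sigma$ for the complex parameter $c$ of Theorem~\ref{thm:symto}, check that both branches of $z$ equal $1+t$, and note that $p=-\rho c\sigma z$, so $n$, the branch split and all outputs match. You also do the output matching more explicitly than the paper, which leaves it to ``the previously discussed changes of variables.''

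The one gap is the subcase $n\neq 0$, $z=0$. The corollary \emph{defines} $\hsigma=-\rho\sigma z/|z|$ whenever $n\neq 0$, so a proof must show this quantity exists. Saying that $\hsigma$ ``may be taken as any unimodular number'' there changes the statement rather than proving it. Most of the paper's proof goes to exactly this point, and the resolution is that the subcase is empty:
\[
z=0 \;\Rightarrow\; t=-1 \;\Rightarrow\; |u|=|t|=1 \;\Rightarrow\; v=0 \;\Rightarrow\; n=\sqrt{c^2\cdot 0+0}=0,
\]
so $n\neq 0$ implies $z\neq 0$. You already have every ingredient, namely $|t|=|u|$ and $1-|t|^2=v^2$.

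You can also read this off the branch formulas without unitarity. If $\re(t)\ge 0$ then $\re(z)\ge 1$. If $\re(t)<0$ then $z=0$ forces $v^2+2\im(t)i=0$, hence $v=0$. This is the form of the argument that survives for nearly-unitary inputs, which is why the paper insists on the alternative formula for $z$.

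With that line added your proof is complete. The case $c=0$ needs no separate treatment: then $z\neq 0$ whenever $n\neq 0$, and your general computation $\hc\hsigma=-\rho c\sigma z/n$ already covers it.
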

\begin{proof}
The principal difference with the symmetric turnover formulas in
Theorem~\ref{thm:symto} is the addition of the variables $\sigma$ and
$\hsigma$. In the $n \neq 0$ branch, the definition of $\hsigma$ requires that
$z \neq 0$. For unitary inputs both formulas for $z$ give the same result.
Using the $\re(t) \geq 0$ branch, notice that $z = 0 \implies |t| = |u| = 1
\implies v = 0$, thus $n = 0$. Therefore, $n \neq 0 \implies z \neq 0$. When
$n=0$, $\hc = 0$ and $\hsigma$ is not uniquely defined, so it is set equal to 1
for simplicity. The rest of the proof follows directly from
Theorem~\ref{thm:symto} and the previously discussed changes of variables. 
\end{proof}

\subsection{Nearly-unitary matrices}
Just as in the symmetric tridiagonal case, the symmetry in the unitary
core-chasing algorithm comes directly from the underlying unitary structure of
the core transformations. Unlike the symmetric case, representing matrices that
are exactly unitary is typically not possible using floating-point numbers.
Take for example $u = v = 1/\sqrt{2}$. Define $\fl(u)$ and $\fl(v)$ to be their
nearest floating-point number approximations, then in exact arithmetic $u^2 +
v^2 = 1$, but $\fl(u)^2 + \fl(v)^2 \neq 1$, since $1/\sqrt{2}$ is irrational. 

Fortunately, even though $\fl(u)^2 + \fl(v)^2 \neq 1$, there exists $-1 <
\epsilon < 1$ such that $\fl(u)^2 + \fl(v)^2 = 1 + \epsilon$. If $\epsilon$ is
not too large, say on the order of machine precision, we can think of the
parameters $\fl(u)$ and $\fl(v)$ as representing a core transformation that is
\emph{nearly-unitary}. 

We say that the core transformation $U$ is nearly-unitary if there exists $0
\leq \epsilon < 1$ such that
\[
\| U^* U - I\| = O(\epsilon),
\]
and, equivalently, that the corresponding parameters $u$ and $v$ are nearly-unitary if
\[
\euv = |u|^2 + v^2 - 1,
\qquad
\left| \euv \right| = O(\epsilon).
\]
We refer to $\euv$ as the \emph{unitary defect} for the parameters $u$ and $v$.
\footnote{A similar concept is defined in \cite[Section 5.1]{GolubVanLoan2013}.}

Since the symmetric turnover is only defined for unitary matrices, we need to
analyze what happens when the input core transformations are only
nearly-unitary. While the analysis that follows is true for any modest value of
$\epsilon$, in practice $\epsilon$ is on the order of machine precision and the
constant in the $O(\epsilon)$ term is on the order of $10^2$.  

Fortunately, as the following theorem
demonstrates, if the inputs to the symmetric turnover are nearly-unitary, then
the outputs are also nearly-unitary and the products $\tQ U Q$ and $\hQ \hU
\htQ$ are close. 

\begin{theorem}[Exactness of Symmetric Turnover for Nearly-Unitary
Inputs]\label{thm:nearly}
Assume that the inputs of Corollary~\ref{cor:symto} are nearly-unitary for $0
\leq \epsilon < 1$. Then, for $\epsilon$ sufficiently small, the corresponding
outputs are also nearly-unitary with respect to the same $\epsilon$ and 
\[
\left\|\tQ U Q - \hQ \hU \htQ \right\| = O(\epsilon).
\]
\end{theorem}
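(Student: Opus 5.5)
The plan is a perturbation argument that reduces to the exact case of Corollary~\ref{cor:symto}. Write the inputs as $u,v,c,s,\sigma,\gamma,\rho$ with defects $\euv=|u|^2+v^2-1$, $\ecs=c^2+s^2-1$, $\esigma=|\sigma|^2-1$, $\egamma=|\gamma|^2-1$, $\erho=|\rho|^2-1$, all $O(\epsilon)$. First I would bound the output defects directly from the formulas.
\begin{itemize}
\item $|\hsigma|=|\rho||\sigma|$ has defect $O(\epsilon)$.
\item $|\hgamma|=|\rho||\gamma|$ has defect $O(\epsilon)$.
\item $\hc^2+\hs^2=(c^2|z|^2+v^2)/n^2=1$ exactly. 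No square-root error occurs in exact arithmetic, since $n$ is defined by that expression.
\item For $\hu,\hv$, compute $|\hu|^2+\hv^2 = |\rho|^{-2}|s^2u-\gamma\sigma^2c^2|^2+s^2n^2$ and expand with $n^2=c^2|z|^2+v^2$.
\end{itemize}

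The key identity is the one behind the exact proof: with unitary data, $|z|^2$ can be rewritten using $|t|^2=|u|^2$ and $2\re(t)$ from the cross term. The expansion then collapses to $(s^2+c^2)^2(|u|^2+v^2)$, up to the defects. Carrying the defects along yields $1+O(\epsilon)$, provided $z$ equals $1+t$ up to $O(\epsilon)$.

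That proviso is exactly where the branch for $z$ matters, so it is the step I expect to be the main obstacle. In the branch $\re(t)\ge 0$, $z=1+t$ exactly. In the branch $\re(t)<0$, compute
\[
(1-\tB)(1+t)=1-|t|^2+2\im(t)i,
\]
and compare with the numerator $v^2+2\im(t)i$. The discrepancy is $1-|t|^2-v^2$, which is $O(\epsilon)$ because $|t|^2=|\gamma|^{-2}|\sigma|^{-4}|u|^2$. Dividing by $|1-\tB|\ge 1$ shows that the computed $z$ satisfies $z=1+t+\delta$ with $|\delta|=O(\epsilon)$.

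This bound is absolute, not relative. The delicate point is therefore whether quantities like $\hsigma=-\rho\sigma z/|z|$ and $\hc=c|z|/n$ remain accurate when $|z|$ is tiny. I would handle this by never dividing by $|z|$ alone in the final product. The entries of $\hQ\hU\htQ$ only involve $\hc\hsigma = -\rho\sigma c z/n$ and $\hs=v/n$, so the $1/|z|$ cancels and every quantity is a smooth function of $(c z, v)/n$.

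Next I would compare the two products. Let $H=\tQ U Q$ and $\hH=\hQ\hU\htQ$, formed from the given nearly-unitary data. Choose nearby exactly unitary data, obtained by normalizing $\sigma,\gamma,\rho$ to the unit circle and $(u,v)$, $(c,s)$ to the unit sphere. This is an $O(\epsilon)$ parameter change, so $H$ changes by $O(\epsilon)$ because its entries are polynomial and bounded. For the exact data, Corollary~\ref{cor:symto} gives $H_{\mathrm{ex}}=\hH_{\mathrm{ex}}$.

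It remains to show $\hH-\hH_{\mathrm{ex}}=O(\epsilon)$. The output parameters are Lipschitz functions of the inputs, except through the normalization by $n$. I would split into cases.
\begin{itemize}
\item \emph{$n$ bounded below.} Everything is Lipschitz, so the bound is immediate.
\item \emph{$n$ small.} Here one can avoid differentiating $(cz,v)/n$, which is not Lipschitz near $0$. Instead, mimic the exact proof: construct $\hQ$ and $\htQ$ from the perturbed data, then bound the $(3,1)$ and $(1,3)$ entries of $\hQ^*H\htQ^*$ by $O(\epsilon)$. Near-unitarity of $K=\hQ^*H\htQ^*$ then forces the remaining entries to match $\hU$ up to $O(\epsilon)$. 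This uses the first paragraph's defect bounds and $\det K = 1+O(\epsilon)$.
\end{itemize}
This "residual of $K$" argument actually works uniformly, so I would present it as the main route and use the case split only as a sanity check. The $n=0$ branch is handled as in Theorem~\ref{thm:symto}: then $v$ and $cz$ vanish, and $H$ is diagonal up to $O(\epsilon)$.
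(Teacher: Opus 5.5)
\textbf{Gap: the residual-of-$K$ step fails exactly where you use it.} You claim that once the first row and column of $K=\hQ^*H\htQ^*$ are within $O(\epsilon)$ of those of $\hU$, near-unitarity of $K$ and $\det K=1+O(\epsilon)$ force the trailing $2\times2$ block to be within $O(\epsilon)$ of $\mathrm{diag}(\huB,1)$. The first-row and first-column bounds are correct, since they only involve the bounded ratios $v/n$ and $c|z|/n$. The inference from them, however, gives only $O(\epsilon/\hv)$. For instance, orthogonality of the first two columns yields only $\hv\,(K(2,2)-\huB)=O(\epsilon)$. This is the perturbed form of the reason the exact proof of Theorem~\ref{thm:symto} needed $s\neq0$ and a separate continuity argument for $s=0$.

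Concretely, suppose $\hv=O(\epsilon)$. Let $R$ be any $2\times2$ real rotation acting on coordinates $2,3$. Then $\hU\,\mathrm{diag}(1,R)$ is unitary, has determinant one, and has first row and column within $O(\epsilon)$ of those of $\hU$. Yet it is $O(1)$ away from $\hU$. Since $\hv=sn$, small $n$ forces small $\hv$, so the argument gives nothing in the regime $n\to0$ where you invoke it. Your Lipschitz case, meanwhile, has constants of order $1/n_0$. Together the two cases do not give a bound uniform in $n$. Computing the trailing block of $K$ directly does not fix this cheaply either: for example, $K(3,2)$ is $cv/n^2$ times an expression that vanishes on exactly unitary data, and $cv/n^2$ is unbounded when $c|z|\ll v\ll 1$.

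\textbf{Missing idea: compare $z$ with the exact $\uz$, not with $1+t$.} Your bound $z=1+t+O(\epsilon)$ compares $z$ with $1+t$ built from the same nearly-unitary data, and in that sense the error really is only absolute. In the paper's example with $u=-1$, $v=\sqrt{\epsilon}$, one has $1+t=0$ but $z=\epsilon/2$. The paper instead compares $z$ with $\uz$ computed from the projected, exactly unitary inputs; in that example $\uz=1-1/\sqrt{1+\epsilon}$, a relative perturbation of $z$.

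Because the projection uses real positive scalars, $t=\ut\lambda$ with $\lambda=1+O(\epsilon)>0$, so $z$ and $\uz$ are computed in the same branch. For $\re(t)\ge0$ there is no cancellation in $1+\ut$. For $\re(t)<0$, the real quantities $v^2=\uv^2\ope$ and $\im(t)=\im(\ut)\ope$ occupy orthogonal components of the numerator, and $1-\tB$ has no cancellation. Hence $z=\uz\ope$ in the relative sense, however small $|z|$ is.

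It follows that $n$, $\hc$, $\hs$, $\hsigma$ and $\hv$ are relatively close to their exact counterparts, and $\hu$ is absolutely close. This gives $\|\hQ\hU\htQ-\uhQ\,\uhU\,\uhtQ\|=O(\epsilon)$ with no case split on $n$ and no appeal to the residual of $K$. Your output-defect computations in the first paragraph are fine and are not affected by this issue.
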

\begin{proof}
Since the inputs are nearly-unitary, their unitary defects are all real and
$O(\epsilon)$,
\[
|u|^2 + v^2 = 1 + \euv,
\quad
c^2 + s^2 = 1 + \ecs,
\quad
|\sigma|^2 = 1 + \esigma,
\quad
|\gamma|^2 = 1 + \egamma,
\quad
|\rho|^2 = 1 + \erho.
\]
For $\epsilon$ sufficiently small, the unitary defects are all strictly less
than 1 in absolute value, so we can project the nearly-unitary inputs onto
nearby inputs that represent exactly unitary core transformations as follows,
\[
\uu = \frac{u}{\sqrt{1+\euv}},
\quad
\uv = \frac{v}{\sqrt{1+\euv}},
\quad
\uc = \frac{c}{\sqrt{1+\ecs}},
\quad
\us = \frac{s}{\sqrt{1+\ecs}},
\]
\[
\usigma = \frac{\sigma}{\sqrt{1+\esigma}},
\quad
\ugamma = \frac{\gamma}{\sqrt{1+\egamma}},
\quad
\urho = \frac{\rho}{\sqrt{1+\erho}}.
\]
The same $\epsilon$ can be taken sufficiently small such that the projected
inputs also satisfy
\[
\ux = x\ope,
\quad
x = u,v,c,s,\sigma,\gamma,\rho.
\]
Now let $\utQ$, $\uU$ and $\uQ$ be the input core transformations to the
symmetric turnover defined in terms of the projected parameters $\uu$, $\uv$,
$\uc$, $\us$, $\usigma$, $\ugamma$ and $\urho$. Then, it follows directly that 
\[
\left\|\tQ U Q - \utQ\,\uU\,\uQ \right\| = O(\epsilon).
\]

Since $\uu$, $\uv$, $\uc$, $\us$, $\usigma$, $\ugamma$ and $\urho$ represent
core transformations that are exactly unitary, applying the symmetric turnover
from Corollary~\ref{cor:symto} results in outputs $\uhu$, $\uhv$, $\uhc$,
$\uhs$, $\uhsigma$ and $\uhgamma$ that correspond to the exactly unitary
core transformations $\uhQ$, $\uhU$ and $\uhtQ$ which satisfy
\[
\utQ\,\uU\,\uQ = \uhQ\,\uhU\,\uhtQ.
\]

To finish the proof we must show that the outputs of applying the symmetric
turnover to the nearly-unitary inputs are also nearly-unitary, and that the
corresponding core transformations $\hQ$, $\hU$ and $\htQ$ satisfy
\[
\left\|\uhQ\,\uhU\,\uhtQ - \hQ\hU\htQ \right\| = O(\epsilon).
\]
To show this, we apply the symmetric turnover from Corollary~\ref{cor:symto} to
the nearly-unitary inputs and track how the relative errors from projecting
them to the exactly unitary inputs propagate. The first step is to compute $t$, 
\[
t = \gammaB \sigmaB^2 u = \underline{\smash{\overline{\gamma}}}
\overline{\usigma}^2 \uu \ope = \ut\ope.
\] 
This says that when $\epsilon$ is sufficiently small, the relative errors
introduced from projecting the nearly-unitary inputs translate into small
relative errors in the parameter $\ut$ computed using the exactly unitary
inputs. What is not shown explicitly is that this relative error is real and
positive. That is because the projections were performed using real, positive
scalars. Since $t$ and $\ut$ differ by a real positive mulitple, the sign of
$\re(t)$ and $\re(\ut)$ are the same, which preserves the branch structure.
When $\re(t) \geq 0$ we have that
\[
z = 1 + t = 1 + \ut\ope = (1+\ut)\ope = \uz\ope,
\]
where we used the fact that there is no cancellation in $1+\ut$. When 
$\re(t) < 0$ we have
\[
z = \frac{v^2 + 2\im(t) i}{1-\tB} = \frac{\uv^2\ope + 2\im(\ut) i\ope}{1-\utB\ope}
= \uz\ope,
\]
where we used the fact that $\uv^2$ and $2\im(\ut) i$ are orthogonal and that
there is no cancellation in $1-\utB$. Although the relative errors from projecting
the inputs were real, the relative error between $z$ and $\uz$ is, in general, complex. 

The last internal parameter is $n$,
\[
n = \sqrt{c^2|z|^2 + v^2} = \sqrt{\uc^2|\uz|^2\ope + \uv^2\ope} = \un\ope,
\]
where we used the fact that there is no cancellation when computing the sum of
squares and that the square-root preserves small relative errors for $\epsilon$
sufficiently small. 

Even though the relative errors in $z$ are complex, the relative error between
$n$ and $\un$ is real and positive, so the branch condition on $n$ is
preserved. Applying a similar analysis to the output variables in both branches
yields the following,
\[
\hu = \uhu + O(\epsilon),
\quad
\hv = \uhv\ope,
\quad
\hc = \uhc\ope,
\quad
\hs = \uhs\ope,
\]
\[
\hsigma = \uhsigma\ope,
\quad
\hgamma = \uhgamma\ope.
\]
Apart from $\hu$, the relative errors introduced by projecting the
nearly-unitary inputs resulted in similar relative errors between these outputs
and the exactly unitary ones. Even with the absolute error in $\hu$, this shows
that for sufficiently small $\epsilon$, the outputs of the symmetric turnover
applied to nearly-unitary inputs are also nearly-unitary. Moreover, the
corresponding core transformations satisfy 
\[
\left\|\uhQ\,\uhU\,\uhtQ - \hQ\hU\htQ \right\| = O(\epsilon).
\]
Combining this with the previous results gives
\[
\left\|\tQ U Q - \hQ \hU \htQ \right\| = O(\epsilon),
\]
which completes the proof.
\end{proof}

Theorem~\ref{thm:nearly} can be seen as a type of mixed stability result for
the symmetric turnover for data that is only nearly-unitary. Namely, there
exist nearby exactly unitary inputs whose outputs are close to the outputs
computed using the nearly-unitary data.

\subsection{Two formulas for $z$}
We conclude this section with an explanation of why the alternative formula for
$z$ proposed by Gragg is needed for stability, even when computing in exact
arithmetic. Recall that in the proof of Corollary~\ref{cor:symto} we used the
fact that $n\neq 0$ implies $z \neq 0$ to show that $\hsigma$ is always
well-defined. For inputs that are only nearly-unitary, the formula $z = 1 + t$
does not guarantee this in general. Take, for example,
\[
u = -1,              \quad
v = \sqrt{\epsilon}, \quad
c = s = 1/\sqrt{2},  \quad
\sigma = 1,          \quad
\gamma = 1,          \quad
\rho = -1,        
\]
for $\epsilon > 0$. Then, $t = \gammaB \sigmaB^2 u = -1$ and $ z = 1 + t = 0$,
but $n = \sqrt{c^2|z|^2 + v^2} = \sqrt{\epsilon}$. So $z = 0$ even though $n >
0$.  Since $\re(t) < 0$ we should instead use the alternative formula for $z$, 
\[
z = \frac{v^2 + 2\im(t)i}{1-\tB} = \frac{\epsilon}{2}.
\]
Using this definition, $n \neq 0 \implies z \neq 0$ as expected. 

For unitary core transformations, both formulas for $z$ are equal, so the
instability can only be seen for nearly-unitary inputs. As this example
illustrates, $z=0$ must imply $v=0$ so that $n$ is also 0. The alternative
formula for $z$ guarantees that this cannot happen, thus avoiding this potential
instability. 

\section{A Square-Root-Free Symmetric Turnover}
\label{sec4_rfuqr}

In this section we show how to compute the symmetric turnover from
Corollary~\ref{cor:symto} implicitly without computing any square roots. This
result is similar to the square-root-free QR algorithms for symmetric
tridiagonal matrices \cite{ortega1963, feng1991} and to the rational QR
algorithms for unitary matrices \cite{gragg1999, stewart2006}.

In this section we also show how to control the growth of the unitary defects
of the iteration parameters throughout the core-chasing steps. This involves
projecting the nearly-unitary outputs of the symmetric and square-root-free
turnovers onto nearby, nearly-unitary outputs with smaller defects. Since the
defects are small, this projection can be done without computing any
square roots using a degree-one Taylor polynomial.   

\subsection{Casting out the square roots}
In the $n \neq 0$ branch, the symmetric turnover in Corollary~\ref{cor:symto}
computes two square roots,
\[
n = \sqrt{c^2|z|^2 + v^2},
\qquad
|z| = \sqrt{\re(z)^2 + \im(z)^2}.
\]
To avoid computing these square roots, we should work with $n^2$ and $|z|^2$
directly, which requires that we square all variables that depend on $n$ and
$|z|$. For $n \neq 0$ this gives,
\begin{equation*}
\begin{array}{l}
\begin{array}{lll}
t &=& \gammaB\sigmaB^2 u,
\end{array} \\
\begin{array}{lll}
z &=& 
\begin{cases}
1+t, & \re(t) \geq 0,\\
\frac{v^2 + 2\im(t)i}{1-\tB}, & \re(t) < 0,
\end{cases}
\end{array} \\
\begin{array}{lll}
n^2 &=& c^2|z|^2 + v^2,\\
\hc^2 &=& c^2|z|^2/n^2,\\
\hsigma^2 &=& \rho^2 \sigma^2 z^2/|z|^2,\\
\hs^2 &=& v^2/n^2,\\
\hu &=& \rhoB(s^2u - \gamma\sigma^2c^2),\\
\hv^2 &=& s^2 n^2\\
\hgamma &=& -\rhoB\gamma.
\end{array}
\end{array}
\end{equation*}
Notice that $t$, $z$, $\hu$ and $\hgamma$ are all unchanged. For the remaining
variables, we compute their squares using $n^2$, $|z|^2$ and the squares of
the inputs. We can simplify this further if we combine $\gamma$ and
$\sigma$ into a new variable $\omega = \gamma\sigma^2$. Defining the
corresponding output $\homega = \hgamma \hsigma^2$, the symmetric turnover
reduces to
\begin{equation*}
\begin{array}{l}
\begin{array}{lll}
t &=& \omegaB u,
\end{array} \\
\begin{array}{lll}
z &=& 
\begin{cases}
1+t, & \re(t) \geq 0,\\
\frac{v^2 + 2\im(t)i}{1-\tB}, & \re(t) < 0,
\end{cases}
\end{array} \\
\begin{array}{lll}
n^2 &=& c^2|z|^2 + v^2,\\
\hc^2 &=& c^2|z|^2/n^2,\\
\homega &=& -\rho \omega z^2/|z|^2,\\
\hs^2 &=& v^2/n^2,\\
\hu &=& \rhoB(s^2u - c^2\omega),\\
\hv^2 &=& s^2 n^2.
\end{array}
\end{array}
\end{equation*}
The phase tracking variable $\gamma$ has been absorbed by $\omega$ and is no
longer explicitly needed. The following corollary shows that computing the
symmetric turnover without square roots is equivalent to computing the turnover
using square roots for a corresponding set of unsquared inputs.  

\begin{corollary}[Square-Root-Free Symmetric Turnover]\label{cor:rfsymto}
Given unimodular complex numbers $\rho$, $\gamma$  and $\omega$, a complex
number $u$, and nonnegative real numbers $\vv$, $\cc$ and $\sss$ such that
$|u|^2 + \vv = 1$ and $\cc + \sss = 1$, define the following, 
\begin{center}
\begin{tikzpicture}[scale=1.00,y=-5mm,x=5mm]
\node at (0.0,0.0) [align=left, anchor=west] {$t$};
\node at (1.0,0.0) [align=left, anchor=west] {$=$};
\node at (2.0,0.0) [align=left, anchor=west] {$\omegaB u,$};

\node at (0.0,2.0) [align=left, anchor=west] {$z$};
\node at (1.0,2.0) [align=left, anchor=west] {$=$};
\draw[
  decorate,
  decoration={brace, amplitude=6pt}
] (2.5,3.5) -- (2.5,0.5);
\node at (2.5,1.0) [align=left, anchor=west] {$1+t,$};
\node at (7.0,1.0) [align=left, anchor=west] {$\re(t) \geq 0,$};
\node at (2.5,2.5) [align=left, anchor=west] {$\dfrac{\vv + 2\im(t) i}{1-\tB},$};
\node at (7.0,2.5) [align=left, anchor=west] {$\re(t) < 0,$};

\node at (0.0,4.0) [align=left, anchor=west] {$\nn$};
\node at (1.0,4.0) [align=left, anchor=west] {$=$};
\node at (2.0,4.0) [align=left, anchor=west] {$\cc\cdot |z|^2 + \vv,$};

\node at (0.0,5.0) [align=left, anchor=west] {$\mathrm{if}\ \nn \neq 0:$};

\node at (1.0,6.0) [align=left, anchor=west] {$\hcc$};
\node at (2.0,6.0) [align=left, anchor=west] {$=$};
\node at (3.0,6.0) [align=left, anchor=west] {$\cc\cdot |z|^2/\nn,$};

\node at (1.0,7.0) [align=left, anchor=west] {$\hsss$};
\node at (2.0,7.0) [align=left, anchor=west] {$=$};
\node at (3.0,7.0) [align=left, anchor=west] {$\vv/\nn,$};

\node at (1.0,8.0) [align=left, anchor=west] {$\hu$};
\node at (2.0,8.0) [align=left, anchor=west] {$=$};
\node at (3.0,8.0) [align=left, anchor=west] {$\rhoB(\sss\cdot u - \cc\cdot\omega),$};

\node at (1.0,9.0) [align=left, anchor=west] {$\hvv$};
\node at (2.0,9.0) [align=left, anchor=west] {$=$};
\node at (3.0,9.0) [align=left, anchor=west] {$\sss\cdot \nn,$};

\node at (1.0,10.0) [align=left, anchor=west] {$\homega$};
\node at (2.0,10.0) [align=left, anchor=west] {$=$};
\node at (3.0,10.0) [align=left, anchor=west] {$-\rho \omega z^2/|z|^2,$};

\node at (0.0,11.0) [align=left, anchor=west] {$\mathrm{else}:$};

\node at (1.0,12.0) [align=left, anchor=west] {$\hcc$};
\node at (2.0,12.0) [align=left, anchor=west] {$=$};
\node at (3.0,12.0) [align=left, anchor=west] {$0,$};

\node at (1.0,13.0) [align=left, anchor=west] {$\hsss$};
\node at (2.0,13.0) [align=left, anchor=west] {$=$};
\node at (3.0,13.0) [align=left, anchor=west] {$1,$};

\node at (1.0,14.0) [align=left, anchor=west] {$\hu$};
\node at (2.0,14.0) [align=left, anchor=west] {$=$};
\node at (3.0,14.0) [align=left, anchor=west] {$\rhoB u,$};

\node at (1.0,15.0) [align=left, anchor=west] {$\hvv$};
\node at (2.0,15.0) [align=left, anchor=west] {$=$};
\node at (3.0,15.0) [align=left, anchor=west] {$0,$};

\node at (1.0,16.0) [align=left, anchor=west] {$\homega$};
\node at (2.0,16.0) [align=left, anchor=west] {$=$};
\node at (3.0,16.0) [align=left, anchor=west] {$1.$};
\end{tikzpicture}
\end{center}
Then, for
\[
u,
\quad
v = \sqrt{\vv},
\quad
c = \sqrt{\cc},
\quad
s = \sqrt{\sss},
\quad
\sigma = \sqrt{\gammaB\omega},
\quad
\gamma,
\quad
\rho,
\]
and
\[
\hu,
\quad
\hv = \sqrt{\hvv},
\quad
\hc = \sqrt{\hcc},
\quad
\hs = \sqrt{\hsss},
\quad
\hsigma = \sqrt{\hgammaB\homega},
\quad
\hgamma = -\rhoB\gamma,
\]
where the square roots are chosen so that the real variables are nonnegative
and $\hsigma = -\rho \sigma z/|z|$,
\[
\tQ U Q = \hQ \hU \htQ.
\]
\end{corollary}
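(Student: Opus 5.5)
The plan is to reduce everything to Corollary~\ref{cor:symto}. Starting from the squared inputs, we define the unsquared quantities $v=\sqrt{\vv}$, $c=\sqrt{\cc}$, $s=\sqrt{\sss}$ as nonnegative square roots, and fix one square root $\sigma$ of $\gammaB\omega$. Then $\sigma$ is unimodular and $\gamma\sigma^2=\omega$, since $|\gamma|=1$. These unsquared inputs satisfy $|u|^2+v^2=1$ and $c^2+s^2=1$, so they are admissible for Corollary~\ref{cor:symto}. That corollary produces outputs $\hc,\hsigma,\hs,\hu,\hv,\hgamma$ with $\tQ U Q=\hQ\hU\htQ$. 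What remains is to show that these are exactly the quantities given by the square roots of the square-root-free outputs, with the prescribed choice of branches.

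First, $t=\gammaB\sigmaB^2u=\overline{\gamma\sigma^2}\,u=\omegaB u$, so $t$ agrees in both formulations. Since $\vv=v^2$, the two branch formulas for $z$ coincide, and so do the branch conditions. Next, $\nn=c^2|z|^2+v^2=n^2$, so the conditions $\nn\neq0$ and $n\neq0$ agree and the two formulations take the same branch.

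In the $n\neq 0$ branch we check each output in turn.
\begin{itemize}
\item $\hcc=c^2|z|^2/n^2=\hc^2$ and $\hsss=v^2/n^2=\hs^2$. Because $\hc,\hs\ge0$, the nonnegative roots recover $\hc$ and $\hs$.
\item $\hvv=s^2n^2=\hv^2$, and $\hv=sn\ge0$ is again the nonnegative root.
\item $\hu=\rhoB(s^2u-\gamma\sigma^2c^2)=\rhoB(\sss\,u-\cc\,\omega)$, which matches directly.
\item For the phase, Corollary~\ref{cor:symto} gives $\hsigma=-\rho\sigma z/|z|$ and $\hgamma=-\rhoB\gamma$. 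Hence $\hgamma\hsigma^2=-\rhoB\gamma\rho^2\sigma^2z^2/|z|^2=-\rho\omega z^2/|z|^2=\homega$, using $\rhoB\rho=1$. So $\hsigma^2=\hgammaB\homega$. Of the two square roots of $\hgammaB\homega$, the statement selects exactly $\hsigma=-\rho\sigma z/|z|$, which is the one in Corollary~\ref{cor:symto}.
\end{itemize}

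In the $n=0$ branch, Corollary~\ref{cor:symto} sets $\hc=0$, $\hs=1$, $\hu=\rhoB u$, $\hv=0$ and $\hsigma=1$. These match $\hcc=0$, $\hsss=1$ and $\hvv=0$. The phase is the only point that needs care. With $\hsigma=1$ and $\hgamma=-\rhoB\gamma$, one gets $\hgamma\hsigma^2=-\rhoB\gamma$, whereas the statement sets $\homega=1$. Here I would use that $\hc=0$ kills every occurrence of $\hsigma$ in $\hQ$ and $\htQ$. The product $\hQ\hU\htQ$ is therefore independent of the choice of $\hsigma$, so any square root of $\hgammaB\homega$ gives the same matrices and the identity $\tQ U Q=\hQ\hU\htQ$ persists. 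The prescription $\hsigma=-\rho\sigma z/|z|$ makes sense only when $z\neq0$, and in this degenerate branch it is vacuous or irrelevant.

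The main obstacle is this phase bookkeeping. It consists of two things: verifying that the single choice of $\sigma$ (either root of $\gammaB\omega$) together with the prescribed root $\hsigma$ is consistent, and handling the $n=0$ branch, where $\homega$ does not equal $\hgamma\hsigma^2$ but the discrepancy is harmless. The other steps are direct substitutions.
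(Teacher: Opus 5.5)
Your proposal is correct and takes the same route as the paper, whose proof simply states that the result follows directly from Corollary~\ref{cor:symto}; you supply the branch-by-branch substitution that the paper leaves implicit. Your remark on the $\nn=0$ branch is a worthwhile detail the paper does not mention: there $\homega=1$ need not equal $\hgamma\hsigma^2$, but this is harmless because $\hc=0$ removes every occurrence of $\hsigma$ (and $\hgamma$) from $\hQ$ and $\htQ$.
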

\begin{proof}
The result follows directly from Corollary~\ref{cor:symto}.
\end{proof}

Corollary~\ref{cor:rfsymto} says that we can implicitly compute the symmetric
turnover without square roots if we work with $v^2$, $c^2$, $s^2$ and $\gamma
\sigma^2$ directly. While this saves two expensive square-root computations, it
comes at a cost. Namely, we no longer store the $U_i$ or $Q_i$ explicitly, and
without $Q_i$ we cannot compute eigenvectors. When calculating eigenvalues,
$v_i^2 \rightarrow 0$, and the eigenvalues are recovered using the $u_i$. Since
the $u_i$ are computed explicitly, the eigenvalues can be computed without
using square roots.

\subsection{Square-root-free renormalization}
In the original unitary QR algorithm in \cite{gragg1986} Gragg noted an
instability caused by a gradual increase in the unitary defect for the pair
$u_i$ and $v_i$. A practical solution is to project the
parameters back onto the unit circle after every iteration. If the unitary
defect is small, then the projection errors are small and relative,
\[
\euv = |u|^2 + v^2 - 1, 
\quad 
u \leftarrow \frac{u}{\sqrt{1+\euv}},
\quad 
v \leftarrow \frac{v}{\sqrt{1+\euv}}.
\] 
Theorem~\ref{thm:nearly} says that the outputs of the symmetric turnover are
nearly-unitary whenever the inputs are. Although not given explicitly, even for
inputs with unitary defects on the order of machine precision, the defects of
the output variables are, in the worst case, on the order of 10 times larger. 

Since we also need to renormalize in the square-root-free turnover, we would
prefer to avoid computing these additional square roots if possible. One option
is to compute the defects and check if they are below some threshold. If they
are small enough, the renormalization step can be skipped.  Otherwise,
renormalization is performed using square roots. While this may save computing
some of the square roots, there is no guarantee that they can be avoided in
general. A more universal approach is to use the fact that the defects are
small and approximate $1/\sqrt{1+\euv}$ with a degree-one Taylor polynomial.
For $\euv$ sufficiently small, the Taylor remainder theorem says that 
\[
\frac{1}{\sqrt{1+\euv}} = 1 - \euv/2 + O(\euv^2).
\]
So, if $u$ and $v$ are nearly-unitary with defect $\euv$, then
\[
u' = u(1-\euv/2),
\qquad
v' = v(1-\euv/2),
\]
are nearly-unitary with defect $O(\euv^2)$.

For defects on the order of machine precision, renormalizing using a degree one
Taylor polynomial is sufficient for reducing any growth in the defects to an
acceptable level. Even in exact arithmetic, we cannot expect the defects to be
smaller than the unit roundoff since $u'$ and $v'$ will also be approximated by
floating-point numbers.

This formula for renormalization requires computing the defect $\euv = |u|^2 +
v^2 - 1$, which is susceptible to cancellation. An alternative formula that
avoids this cancellation first computes $x = |u|^2 + v^2$ then renormalizes via
the formulas, 
\[
u' = u(3-x)/2,
\qquad
v' = v(3-x)/2.
\]
For the square-root-free turnover, we have to renormalize $vv$ instead of $v$.
Using a degree-one Taylor polynomial, this becomes
\[
1/(1+\euv)= 1-\euv + O(\euv^2),
\qquad
vv' = vv(1-\euv) = vv(2-x),
\]
where $x = |u|^2 + vv$ and $\euv = x-1$. Similarly, first order renormalization for 
the variables $\gamma$, $\sigma$ and $\omega$ as follows
\[
\gamma' = \gamma(3-|\gamma|^2)/2,
\quad
\sigma' = \sigma(3-|\sigma|^2)/2
\quad
\mathrm{and}
\quad
\omega' = \omega(3-|\omega|^2)/2.
\]
The variables $c$ and $s$, as well as $\cc$ and $\sss$, are normalized by
construction and therefore do not need to be renormalized. The results of
Theorem~\ref{thm:nearly} still apply for the turnovers with renormalization
since the first order corrections introduce at most $O(\epsilon)$ relative
errors. Algorithms~\ref{alg:symto} and \ref{alg:rfsymto} give pseudocode for
computing the symmetric and square-root-free turnovers with renormalization.
\begin{figure}[h!]
\centering
\begin{minipage}[t]{0.48\textwidth}
\captionsetup{type=algorithm}
\captionof{algorithm}{Symmetric Turnover with Renormalization}
\label{alg:symto}
\scriptsize
\begin{algorithmic}
\REQUIRE $u$, $v$, $c$, $s$, $\sigma$, $\gamma$, $\rho$
\STATE $t = \gammaB\sigmaB^2u$
\IF{$\re(t) \geq 0$}
  \STATE $z = 1+t$
\ELSE
  \STATE $z = (v^2 + 2\im(t)i)/(1-\tB)$
\ENDIF
\STATE $n = \sqrt{c^2 |z|^2 + v^2}$
\IF{$n > 0$}
  \STATE $\hc = c |z|/n$
  \STATE $\hsigma = -\rho \sigma z/|z|$
  \STATE $\hs = v/n$
  \STATE $\hu = \rhoB(s^2u-\gamma\sigma^2c^2)$
  \STATE $\hv = ns$
\ELSE
  \STATE $\hc = 0$
  \STATE $\hsigma = 1$
  \STATE $\hs = 1$
  \STATE $\hu = \rhoB u$
  \STATE $\hv = 0$
\ENDIF
\STATE $\hgamma = -\rhoB \gamma$
\STATE \textbf{Renormalization}
\STATE $x = |\hu|^2 + \hv^2$
\STATE $\hu = \hu(3-x)/2$
\STATE $\hv = \hv(3-x)/2$
\STATE $\hsigma = \hsigma(3-|\hsigma|^2)/2$
\STATE $\hgamma = \hgamma(3-|\hgamma|^2)/2$
\RETURN $\hu$, $\hv$, $\hc$, $\hs$, $\hsigma$, $\hgamma$
\end{algorithmic}
\end{minipage}
\hfill
\begin{minipage}[t]{0.48\textwidth}
\captionsetup{type=algorithm}
\captionof{algorithm}{Square-Root-Free Symmetric Turnover with Renormalization}
\label{alg:rfsymto}
\scriptsize
\begin{algorithmic}
\REQUIRE $u$, $\vv$, $\cc$, $\sss$, $\omega$, $\rho$
\STATE $t = \omegaB u$
\IF{$\re(t) \geq 0$}
  \STATE $z = 1+t$
\ELSE
  \STATE $z = (\vv + 2\im(t)i)/(1-\tB)$
\ENDIF
\STATE $zz = \re(z)^2 + \im(z)^2$
\STATE $nn = \cc\cdot zz + \vv$
\IF{$nn > 0$}
  \STATE $\hcc = \cc\cdot zz/nn$
  \STATE $\homega = -\rho\omega z^2/zz$
  \STATE $\hsss = \vv/nn$
  \STATE $\hu = \rhoB(\sss\cdot u-\cc\cdot\omega)$
  \STATE $\hvv = nn\cdot \sss$
\ELSE
  \STATE $\hcc = 0$
  \STATE $\homega = 1$
  \STATE $\hsss = 1$
  \STATE $\hu = \rhoB u$
  \STATE $\hvv = 0$
\ENDIF
\STATE \textbf{Renormalization}
\STATE $x = |\hu|^2 + \hvv$
\STATE $\hu = \hu(3-x)/2$
\STATE $\hvv = \hvv(2-x)$
\STATE $\homega = \homega(3-|\homega|^2)/2$
\RETURN $\hu$, $\hvv$, $\hcc$, $\hsss$, $\homega$
\end{algorithmic}
\end{minipage}

\end{figure}

\section{Numerical Stability}
\label{sec5_stability}

In this section we will show that the symmetric and square-root-free turnovers
in Algorithms~\ref{alg:symto} and \ref{alg:rfsymto} are backward stable, which
is enough to guarantee that the entire core-chasing iteration is backward
stable \cite{aurentz2018}. We will also show in Example~\ref{ex:nearly} that
even in exact arithmetic we cannot expect the turnover to be exact for matrices
that are only nearly-unitary. Since the floating-point representations of the
core transformations are almost never exactly unitary, we cannot expect any
sort of structured backward error result, at least not in a strict sense.
Instead we will prove normwise backward stability for the input
core transformations to the turnover. 

\subsection{Forward stability}
The first result is a mixed forward-backward error result when computing the
turnover in floating-point arithmetic. Our analysis assumes that the standard
axioms of floating-point arithmetic hold. See \cite{higham2002}, for example.
Let $\rerr$ be the unit roundoff. Then, for floating-point numbers $x$ and $y$,
possibly complex, we assume the following, 
\begin{equation*}
\begin{array}{rcll}
\fl(x \circ y) &=& (x \circ y)(1+O(\rerr)), & \circ = +,-,\ast,/ \\
\fl(\sqrt{x}) &=& \sqrt{x}\ (1+O(\rerr)).     &
\end{array}
\end{equation*}
The following lemma shows that both the symmetric and square-root-free
turnovers are forward stable.
\begin{lemma}[Forward Stability of Algorithm~\ref{alg:symto}]\label{lem:fstab}
Assume the matrices $\tQ$, $U$ and $Q$ are stored in floating point and
nearly-unitary with $O(\rerr)$ defects, and let $\hQ'$, $\hU'$ and $\htQ'$ be
the outputs of the symmetric turnover computed via Algorithm~\ref{alg:symto}.
Then,
\[
\left\|\tQ U Q - \hQ' \hU' \htQ' \right\| = O(\rerr).
\]
\end{lemma}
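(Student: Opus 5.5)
The plan is to reduce the floating-point analysis to Theorem~\ref{thm:nearly}, in the same spirit as its proof. That proof showed that projecting nearly-unitary inputs to exactly unitary ones produces only small, branch-preserving errors. Here, instead of projection errors, we track rounding errors. Each floating-point operation in Algorithm~\ref{alg:symto} introduces a factor $(1+O(\rerr))$, and the inputs carry $O(\rerr)$ defects. So we will show that every computed intermediate quantity equals the exact-arithmetic value, computed from the same nearly-unitary inputs, up to a relative error $(1+O(\rerr))$. The only exception is $\hu$, which carries an absolute $O(\rerr)$ error.

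The key steps follow the order of the algorithm.

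\begin{enumerate}
\item The quantity $t=\gammaB\sigmaB^2u$ involves only multiplications, so $\fl(t)=t\,(1+O(\rerr))$.
\item In the branch $\re(t)\ge 0$, the sum $1+\fl(t)$ has no cancellation, so $\fl(z)=z\,(1+O(\rerr))$.
\item In the branch $\re(t)<0$, the denominator $1-\tB$ has no cancellation. The numerator $v^2+2\im(t)i$ is a sum of a real and an orthogonal imaginary part, so relative errors in each part give a relative error in the whole. Hence again $\fl(z)=z\,(1+O(\rerr))$.
\item The quantity $n^2=c^2|z|^2+v^2$ is a sum of nonnegative terms, and the square root preserves relative error, so $\fl(n)=n(1+O(\rerr))$.
\item The outputs $\hc,\hs,\hv,\hsigma,\hgamma$ are built from products and quotients of these, so each has relative error $O(\rerr)$.
\item The output $\hu=\rhoB(s^2u-\gamma\sigma^2c^2)$ may suffer cancellation, but every quantity involved is bounded by $O(1)$, so its error is absolutely $O(\rerr)$.
\item Renormalization multiplies by $(3-x)/2=1+O(\rerr)$, so it contributes only further $O(\rerr)$ relative perturbations.
\end{enumerate}

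With this established, the computed outputs define core transformations $\hQ',\hU',\htQ'$ whose entries differ from those of the exact-arithmetic outputs $\hQ,\hU,\htQ$ (computed from the same nearly-unitary inputs) by $O(\rerr)$. All entries are $O(1)$, so $\|\hQ\hU\htQ-\hQ'\hU'\htQ'\|=O(\rerr)$. Combining this with Theorem~\ref{thm:nearly}, which gives $\|\tQ U Q-\hQ\hU\htQ\|=O(\rerr)$ with $\epsilon=\rerr$, and applying the triangle inequality yields the claim.

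The main obstacle is branch consistency. Rounding could make the test $\re(\fl(t))\ge0$, or the test $\fl(n)>0$, disagree with the exact test, and then the computed and exact outputs would follow different branches.

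\begin{itemize}
\item For $n$, there is no difficulty: floating-point products and sums of nonnegative quantities vanish only when a factor vanishes. So $\fl(n)=0$ if and only if $n=0$ (barring underflow, which we exclude).
\item For $t$, if $\re(t)$ is within $O(\rerr)$ of $0$, we cannot guarantee the same branch. We will show that this does not matter: near $\re(t)=0$, both formulas for $z$ are well conditioned, since $|1+t|\ge1$ and $|1-\tB|\ge1$. Moreover, the two formulas agree up to a relative $O(\rerr)$ error, because their difference is $\euv/(1-\tB)$ and $|z|\gtrsim 1$ there.
\end{itemize}

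Hence whichever branch is taken yields $\fl(z)=z(1+O(\rerr))$, where $z$ denotes the value from the exact branch. The rest of the argument then goes through unchanged.
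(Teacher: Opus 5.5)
\paragraph{Gap: step~3 fails.} The failure of step~3 also undermines your claim that $\fl(n)=0$ if and only if $n=0$. The rounding model gives $\fl(t)=t(1+\delta)$ with $\delta$ \emph{complex}. So $\im(\fl(t))=\im(t)+\im(t\delta)$ carries an absolute error of order $\rerr$ that is not proportional to $\im(t)$. Your ``real part plus orthogonal imaginary part'' argument needs a relative error in each part, and $\im(\fl(t))$ does not have one.

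In the branch $\re(t)<0$ this matters exactly in the regime the alternative formula was designed for: $t\approx-1$ with $v^2$ and $|\im(t)|$ of order $\rerr$ or smaller. There $\fl(z)$ has no relative accuracy, so $\hsigma$, $\hc$, $\hs$ and even the test on $n$ can be wrong by $O(1)$. Concretely, take $v=0$, $\sigma=1$, $u=-\gamma$ and $c>0$. Then $t=-|\gamma|^2$ is real, $z=n=0$, and the exact outputs are $\hc=0$, $\hs=1$. Nothing in the model prevents $\im(\fl(t))\neq0$, and then $\fl(n)>0$ and the computed outputs are $\hc'\approx1$, $\hs'=0$.

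Both factorizations still reproduce $\tQ UQ$ to $O(\rerr)$, because the turnover is not unique when the product nearly decouples. So the lemma is not contradicted. But your intermediate claim is false: the computed factors are not entrywise $O(\rerr)$-close to the exact-arithmetic factors \emph{for the same inputs}, and no sharper bookkeeping will repair this. Your treatment of a flip in the test $\re(t)\ge0$ is essentially fine, but that is not where the difficulty lies. (The two formulas differ by $(1-|t|^2-v^2)/(1-\tB)$, which involves the defects of $\gamma$ and $\sigma$ as well as $\euv$.)

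\paragraph{Missing idea: a backward error on $\sigma$.} The fix is to turn the forward error in $t$ into a backward error on an input. For $t\neq0$, set $\sigma''=\sigma\sqrt{\tB'/\tB}$ with $t'=\fl(t)$. Then $\sigma''=\sigma\opr$ and $t'=\gammaB\,\overline{\sigma''}^{\,2}u$ holds exactly. Now compare the computed quantities with the exact-arithmetic run of Algorithm~\ref{alg:symto} on the perturbed inputs $u,v,c,s,\sigma'',\gamma,\rho$. Since $t''=t'$, the branch tests agree automatically.

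Every later operation ($1+t'$, $v^2$, $2\im(t')$, $1-\tB'$, the sum of squares in $n$, and the products and quotients) then combines real or mutually orthogonal quantities whose rounding errors are genuinely relative. This yields $z'=z''\opr$, relative closeness of $\hv,\hc,\hs,\hsigma,\hgamma$, and absolute $O(\rerr)$ closeness of $\hu$. The fact that $\hu'$ and $\hsigma'$ are computed from $\sigma$ rather than $\sigma''$ is absorbed using $\sigma=\sigma''\opr$.

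The argument then closes with $\|\tQ UQ-\tQ''UQ''\|=O(\rerr)$, Theorem~\ref{thm:nearly} applied to the perturbed data, and the triangle inequality. Your steps~4--7 and your final triangle-inequality argument go through unchanged once the comparison is made against this perturbed problem rather than the original one.
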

\begin{proof}
The first step is to compute $t$ from the inputs $u$, $\sigma$ and $\gamma$.
Since this is done via complex multiplication, it follows that
\[
t' = \fl(t) = \fl(\gammaB \sigmaB^2 u) = t\opr.
\]
Unlike Theorem~\ref{thm:nearly}, the relative error $\opr$ is complex in
general. So, even though the relative errors in $t'$ are small, the relative
errors in $\re(t')$ and $\im(t')$ may be large. This means that $t'$ does not
necessarily preserve the branch structure for computing $z$, except in the case
$t = 0$. 

To avoid this complication, the forward error in $t'$ can be interpreted as a
backward error for the input $\sigma$. Assume $t \neq 0$ and define $\sigma''$
as
\[
\sigma'' = \sigma \sqrt{\tB'/\tB}.
\]
Then, $\sigma'' = \sigma\opr$ and $t' = \gammaB \sigmaB''^2 u$. Next, define
$\tQ''$ and $Q''$ using $\sigma''$ and the other unperturbed inputs,
\[
\tQ'' = 
\begin{bmatrix}
-\rhoB \gamma c \sigma'' & \phantom{-{}}\rhoB s  & \\
-\rho s & -\rho\gammaB c\sigmaB'' & \\
     &                & 1 
\end{bmatrix},
\qquad
Q'' = 
\begin{bmatrix}
 c\sigma'' & -s           & \\
 s & \phantom{-{}}c\sigmaB'' \\
     &                & 1 
\end{bmatrix}.
\]
It follows immediately that 
\[ 
\| \tQ U Q - \tQ'' U Q'' \| = O(\rerr).
\]
Applying Algorithm~\ref{alg:symto} to the inputs $u$, $v$, $c$, $s$,
$\sigma''$, $\gamma$ and $\rho$ in exact arithmetic gives outputs $\hu''$,
$\hv''$, $\hc''$, $\hs''$, $\hsigma''$ and $\hgamma''$, and corresponding
matrices $\hQ''$, $\hU''$ and $\htQ''$ that satisfy
\[
\| \tQ'' U Q'' - \hQ'' \hU'' \htQ''\| = O(\rerr).
\]
To complete the proof it is enough to show that 
\[
\hu' = \hu'' + O(\rerr),
\quad
\hv' = \hv''\opr,
\quad
\hc' = \hc''\opr,
\]
\[
\hs' = \hs''\opr,
\quad
\hsigma' = \hsigma''\opr,
\quad
\hgamma' = \hgamma''\opr,
\]
from which it follows that
\[
\| \hQ'' \hU'' \htQ'' - \hQ' \hU' \htQ'\| = O(\rerr).
\]
In exact arithmetic, the first step in the turnover using $\sigma''$ is
\[
t'' = \gammaB \sigmaB''^2 u = t'.
\]
So using $\sigma''$ in exact arithmetic produces the same value for $t$ as
using $\sigma$ in floating-point arithmetic. This means that the branching
condition for $z$ is maintained. For $\re(t') \geq 0$,
\[
z' = \fl(z) = \fl(1+t') = (1+t')\opr = (1+t'')\opr = z''\opr.
\] 
For the branch $\re(t') < 0$, 
\[
z' = \frac{\fl(v^2) + \fl(2\im(t'))i}{\fl(1-\tB')} 
= \frac{v^2\opr + 2\im(t')\opr i}{(1-\tB')\opr}. 
\]
Since $v^2$ and $2\im(t')$ are real, $v^2$ and $2\im(t')i$ are orthogonal, so
their rounding errors are real and orthogonal and there is no cancellation.
Therefore, the rounding errors in $z'$ satisfy 
\[
z' = \frac{v^2 + 2\im(t')i}{1-\tB'}\opr = z''\opr. 
\]
Applying a similar analysis to $n'$, $\hv'$, $\hc'$, $\hs'$ and $\hgamma'$
gives
\[
n' = n''\opr,
\quad
\hv' = \hv''\opr,
\quad
\hc' = \hc''\opr,
\]
\[
\hs' = \hs''\opr,
\quad
\hgamma' = \hgamma''\opr,
\]
before renormalization. For $\hu'$ and $\hsigma'$, extra care has to be taken
to account for the difference between $\sigma$ and $\sigma''$,
\begin{equation*}
\begin{split}
\hu' &= \fl(\rhoB(s^2u - \gamma \sigma^2 c^2))\\ 
&= \rhoB(s^2u - \gamma \sigma^2 c^2) + O(\rerr)\\
&= \rhoB(s^2u - \gamma \sigma''^2\opr c^2) + O(\rerr)\\
&= \hu'' + O(\rerr),
\end{split}
\end{equation*}
\[
\hsigma' = \fl(-\rho \sigma z'/|z'|) 
= -\rho \sigma'' z''/|z''|\opr = \hsigma''\opr.
\]
Thus, before renormalization, the $'$ and $''$ variables are $O(\rerr)$ close.
Since they are also nearly-unitary with $O(\rerr)$ defects, the errors
introduced from renormalization, including the additional rounding errors, are
relative and $O(\rerr)$, and the outputs remain within $O(\rerr)$ of each other. 
\end{proof}

Lemma~\ref{lem:fstab} says that computing Algorithm~\ref{alg:symto} in
floating-point arithmetic gives a factorization $\hQ' \hU' \htQ'$ that is
still within $O(\rerr)$ of the product of the inputs $\tQ U Q$ as long as their
nearly-unitary defects are also $O(\rerr)$. As the following corollary shows,
the same is true for the square-root-free turnover in
Algorithm~\ref{alg:rfsymto}.  

\begin{corollary}[Forward Stability of Algorithm~\ref{alg:rfsymto}]\label{cor:fstab}
Assume the matrices $\tQ$, $U$ and $Q$ are stored in floating point and
nearly-unitary with $O(\rerr)$ defects, and let $\hQ'$, $\hU'$ and $\htQ'$ be
the outputs of the symmetric turnover computed implicitly via
Algorithm~\ref{alg:rfsymto}. Then,
\[
\left\|\tQ U Q - \hQ' \hU' \htQ' \right\| = O(\rerr).
\]
\end{corollary}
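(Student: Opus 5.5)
We follow the proof of Lemma~\ref{lem:fstab} step by step, now for the squared variables of Algorithm~\ref{alg:rfsymto}. By Corollary~\ref{cor:rfsymto}, the outputs $\hu'$, $\hvv'$, $\hcc'$, $\hsss'$, $\homega'$ define core transformations $\hQ'$, $\hU'$, $\htQ'$ through the implicit square roots $\hv'=\sqrt{\hvv'}$, $\hc'=\sqrt{\hcc'}$, $\hs'=\sqrt{\hsss'}$ and $\hsigma'=\sqrt{\hgammaB\homega'}$, with the branch fixed by $\hsigma=-\rho\sigma z/|z|$. The inputs are interpreted the same way. The key observation is that a relative perturbation $\opr$ of a squared quantity gives a relative perturbation $\opr$ of its square root, since $\sqrt{1+O(\rerr)}=1+O(\rerr)$. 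The same holds for a phase $\omega$ and $\sigma=\sqrt{\gammaB\omega}$ once the branch is fixed continuously. It therefore suffices to show that each squared output has a small relative error (and $\hu'$ a small absolute error) with respect to the exact-arithmetic outputs on slightly perturbed inputs.

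\textbf{Step 1: the branch issue.} Computing $t'=\fl(\omegaB u)=t\opr$ can again move $t$ across the line $\re(t)=0$. As in Lemma~\ref{lem:fstab}, we convert this forward error into a backward perturbation of the phase. For $t\neq0$ set $\omega''=\omega\, \tB/\tB'$ (equivalently $\sigma''=\sigma\sqrt{\tB'/\tB}$). Then $\omega''=\omega\opr$, $t'=\omegaB''u$ exactly, and the perturbed input matrices $\tQ''$, $Q''$ satisfy $\|\tQ U Q-\tQ''UQ''\|=O(\rerr)$. In exact arithmetic on the $''$ inputs, Algorithm~\ref{alg:rfsymto} reproduces $t''=t'$, so the branch for $z$ agrees. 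The input $\vv$ enters directly, so no square root is needed here. By Theorem~\ref{thm:nearly} together with Corollary~\ref{cor:rfsymto}, the exact outputs on the $''$ inputs give $\|\tQ''UQ''-\hQ''\hU''\htQ''\|=O(\rerr)$, since the $''$ inputs are nearly-unitary with $O(\rerr)$ defects.

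\textbf{Step 2: propagation of relative errors.} We then track rounding errors through the remaining quantities.
\begin{itemize}
\item $z'=z''\opr$ in both branches. In the second branch this uses the orthogonality of $\vv$ and $2\im(t')i$, and the absence of cancellation in $1-\tB'$.
\item $zz'=\re(z')^2+\im(z')^2=|z''|^2\opr$, because it is a sum of squares with no cancellation.
\item $\nn'=\nn''\opr$ for the same reason. Its sign is therefore preserved, and so is the branch on $\nn$.
\item $\hcc'$, $\hsss'$ and $\hvv'$ are products and quotients of these, so each carries a relative error $\opr$.
\item $\homega'=\fl(-\rho\omega z'^2/zz')$. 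Here we use $\omega=\omega''\opr$, and the error in $z'^2/zz'$ is relative, so $\homega'=\homega''\opr$.
\item $\hu'=\rhoB(\sss u-\cc\,\omega)+O(\rerr)=\hu''+O(\rerr)$, since all terms are bounded.
\end{itemize}
The renormalization formulas $\hu(3-x)/2$, $\hvv(2-x)$ and $\homega(3-|\homega|^2)/2$ apply $1+O(\rerr)$ factors to quantities with $O(\rerr)$ defects, so these closeness relations persist after renormalization.

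\textbf{Step 3: conclusion, and the main obstacle.} Taking implicit square roots turns the relations of Step~2 into $\hv'=\hv''\opr$, $\hc'=\hc''\opr$, $\hs'=\hs''\opr$ and $\hsigma'=\hsigma''\opr$. The triangle inequality then gives the bound in the statement. The main obstacle is the phase $\hsigma'=\sqrt{\hgammaB\homega'}$. We must check that the branch prescribed by Corollary~\ref{cor:rfsymto}, namely $\hsigma=-\rho\sigma z/|z|$, selects for the computed data the square root close to $\hsigma''$ rather than its negative. To handle this, we define $\hsigma'$ by that prescription using $\sigma$ and $z'$. Its square equals $\hgammaB\homega'$ only up to a factor $\opr$, and we absorb this mismatch into the $\opr$ error of $\homega'$. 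The case $t=0$ and the degenerate branch $\nn=0$ are handled exactly as in Lemma~\ref{lem:fstab}.
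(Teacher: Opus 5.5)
Your proposal is correct and follows the paper's route: the paper's proof just says the result follows from Lemma~\ref{lem:fstab} and the arguments of Theorem~\ref{thm:nearly} and Corollary~\ref{cor:rfsymto}, and you carry out exactly that replay on the squared variables, including the sign choice for $\hsigma'$. One slip to fix: to get $\omegaB''u=t'$ you need $\omega''=\omega\,\tB'/\tB$, which is what your parenthetical $\sigma''=\sigma\sqrt{\tB'/\tB}$ gives via $\omega''=\gamma\sigma''^2$, not $\omega''=\omega\,\tB/\tB'$ as written.
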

\begin{proof}
The proof follows from Lemma~\ref{lem:fstab} and the arguments in
Theorem~\ref{thm:nearly} and Corollary~\ref{cor:rfsymto}.
\end{proof}

\subsection{Backward stability}

We are now ready to show backward stability of the symmetric and
square-root-free turnovers. The following theorem shows that the previous
forward error results can be interpreted as backward errors in the input
matrices, as long as they are nearly-unitary. 

\begin{theorem}[Backward Stability of Algorithms~\ref{alg:symto} and
\ref{alg:rfsymto}]\label{thm:bstab}
Let $\tQ$, $U$ and $Q$, and $\hQ'$, $\hU'$ and $\htQ'$ be as in
Lemma~\ref{lem:fstab} and Corollary~\ref{cor:fstab}. Then, there exists $U'$
such that, 
\[
\tQ U' Q = \hQ' \hU' \htQ',
\qquad
\|U - U'\| = O(\rerr).
\]
\end{theorem}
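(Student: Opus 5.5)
The plan is to define $U'$ explicitly by solving the computed identity for the middle factor,
\[
U' = \tQ^{-1}\, \hQ' \hU' \htQ'\, Q^{-1},
\]
so that $\tQ U' Q = \hQ' \hU' \htQ'$ holds by construction. All the work then goes into showing $\|U - U'\| = O(\rerr)$. We will write
\[
U - U' = \tQ^{-1}\bigl(\tQ U Q - \hQ' \hU' \htQ'\bigr) Q^{-1},
\]
which gives
\[
\|U - U'\| \le \|\tQ^{-1}\|\,\|\tQ U Q - \hQ' \hU' \htQ'\|\,\|Q^{-1}\|.
\]

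The middle factor is $O(\rerr)$ by Lemma~\ref{lem:fstab} for Algorithm~\ref{alg:symto}. For Algorithm~\ref{alg:rfsymto} it is $O(\rerr)$ by Corollary~\ref{cor:fstab}, where the core transformations are those implicitly represented through the square roots of Corollary~\ref{cor:rfsymto}. What remains is to bound $\|\tQ^{-1}\|$ and $\|Q^{-1}\|$ by $1+O(\rerr)$.

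For these inverse bounds we use near-unitarity. Since $Q$ is nearly-unitary with $O(\rerr)$ defect, $Q^*Q = I + E$ with $\|E\| = O(\rerr)$. Hence the singular values of $Q$ lie in $[\sqrt{1-\|E\|},\sqrt{1+\|E\|}]$, and for $\rerr$ small enough $Q$ is invertible with $\|Q^{-1}\| \le (1-\|E\|)^{-1/2} = 1+O(\rerr)$. The same argument applies to $\tQ$. Concretely, its $2\times 2$ block has determinant $|\gamma|^2|\sigma|^2c^2 + s^2 = 1 + O(\rerr)$ and its Gram matrix is $I+O(\rerr)$. Alternatively, compare $Q$ with the projected unitary $\uQ$ from the proof of Theorem~\ref{thm:nearly}: we have $Q = \uQ(I+F)$ with $\|F\|=O(\rerr)$, so a Neumann series gives the inverse bound directly.

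The one point I expect to need care is making sure the constants do not depend on the data, i.e. that the nearly-unitary assumption controls $\|\tQ^{-1}\|$ uniformly even when $c$ or $s$ is near zero. This follows from the Gram-matrix bound, which involves only the defects $\ecs,\esigma,\egamma,\erho$ and not the individual parameter sizes.

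We would add a remark that $U'$ is generally not a core transformation of the structured form with parameters $(u',v')$. It is only a full $3\times 3$ matrix within $O(\rerr)$ of $U$. This is consistent with the paper's claim that a structured backward error result is impossible in general, and it is why the theorem is stated normwise.
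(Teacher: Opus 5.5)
Your proposal is correct and is essentially the paper's proof: your $U' = \tQ^{-1}\hQ'\hU'\htQ'Q^{-1}$ is exactly the paper's $U' = U + \tQ^{-1}\Delta Q^{-1}$, with $\Delta = \hQ'\hU'\htQ' - \tQ U Q$ supplied by Lemma~\ref{lem:fstab} and Corollary~\ref{cor:fstab}. Your Gram-matrix and singular-value bound $\|\tQ^{-1}\|,\|Q^{-1}\| \le 1+O(\rerr)$ makes explicit what the paper only asserts when it says the inverses are nearly-unitary. One trivial slip: the determinant of $[\tQ]$ carries an extra factor $|\rho|^2$, which is still $1+O(\rerr)$.
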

\begin{proof}
By Lemma~\ref{lem:fstab}, there exists $\Delta$ such that
\[
\tQ U Q + \Delta = \hQ' \hU' \htQ',
\qquad
\|\Delta\| = O(\rerr).
\]
Since $\tQ$ and $Q$ are nearly-unitary, $\tQ^{-1}$ and $Q^{-1}$ exist and are also 
nearly-unitary with $O(\rerr)$ defects. Define $U'$ as,
\[
U' = U + \tQ^{-1} \Delta Q^{-1}.
\]
Then, 
\[
\tQ U' Q = \hQ' \hU' \htQ',
\qquad
\|U - U'\| = O(\rerr),
\]
which completes the proof.
\end{proof}
Theorem~\ref{thm:bstab} says that computing the symmetric turnover in
floating-point arithmetic for inputs that are nearly-unitary with $O(\rerr)$
defects computes a factorization that is exact for a nearby input matrix $U'$.
This means that the symmetric turnover and the entire core-chasing step are
backward stable \cite{aurentz2018}. While $U$ is a core transformation, $U'$ is
almost surely not, which means the errors in $U-U'$ are not confined to the
nontrivial block $[U]$. Example~\ref{ex:nearly} shows that we cannot expect
structured errors for nearly-unitary matrices, so this result is not
surprising.

\subsection{Structured backward error}
The question remains as to whether the backward stability in
Theorem~\ref{thm:bstab} can be further refined into a form of structured
backward stability. In the strictest sense this would mean mapping the forward
errors in the computed outputs to backward errors on the input parameters that
define the core transformations. The following example shows that the symmetric
turnover is not, in general, exact for nearly-unitary matrices.  This
immediately implies that for computed outputs that are only nearly-unitary,
there is no way to convert forward errors in the outputs of the turnover into
backward errors on the inputs.  


\begin{example}[Impossibility of Structured Backward Error for Nearly-Unitary
Outputs]\label{ex:nearly}
For $0 < \epsilon < 1$, define the following nearly-unitary output parameters
of the symmetric turnover,
\[
\hu = 1,                 \quad
\hv = \sqrt{2\epsilon},  \quad
\hc = \hs = 1/\sqrt{2},  \quad
\hsigma = 1,             \quad
\hgamma = 1,             \quad
\rho = -1.
\]
These parameters are all nearly-unitary, so the following product is also
nearly-unitary,
\[
\hQ \hU \htQ
=
\begin{bmatrix}
1 & -\sqrt{\epsilon} & \sqrt{\epsilon}\\
\sqrt{\epsilon} & 0 & -1\\
\sqrt{\epsilon} & 1 & 0
\end{bmatrix}.
\]
Suppose there exist input parameters $u$, $v$, $c$, $s$, $\sigma$ and $\gamma$
such that
\[
\tQ U Q = \hQ \hU \htQ.
\]
Expanding the product of the input factors gives
\[
\tQ U Q
=
\begin{bmatrix}
\rhoB(s^2u-\gamma c^2\sigma^2)
&
\rhoB cs(\gamma\sigma+\sigmaB u)
&
-\rhoB sv
\\[1ex]
-\rho cs(\sigma+\gammaB\sigmaB u)
&
\rho s^2-\rho\gammaB c^2\sigmaB^2u
&
\rho\gammaB c\sigmaB v
\\[1ex]
sv
&
c\sigmaB v
&
\uB
\end{bmatrix}.
\]
Comparing the $(3,3)$ entries gives $\uB=0$, and hence $u=0$. Comparing
the $(2,2)$ entries then gives $\rho s^2=0$. Since $\rho=-1$, this implies
$s=0$. However, comparing the $(1,3)$ entries gives
\[
-\rhoB sv=\sqrt{\epsilon}.
\]
Since $0<\epsilon<1$, the right-hand side is nonzero, contradicting $s=0$.
Therefore there exist no input parameters such that
\[
\tQ U Q = \hQ \hU \htQ.
\]
In particular, the nearly-unitary output factors cannot, in general, be
interpreted as the exact symmetric turnover of nearby nearly-unitary input
core transformations.
\end{example}

Example~\ref{ex:nearly} shows that we cannot expect the symmetric turnover to
be exact when working with inputs that are only nearly-unitary, even when
computed in exact arithmetic. Since the forward errors caused by computing in
floating-point arithmetic will almost surely result in outputs that are only
nearly-unitary, we cannot expect to be able to find perturbed inputs that make
the turnover exact.

\section{Numerical Experiments}
\label{sec6_experiments}

In this section we present several numerical experiments to illustrate the
effects of removing the square roots from the computations. 

\subsection{Square roots are expensive}
The goal of our first experiment is to show why removing square roots is so
important. While it is true that the double-precision operations $+$, $\ast$,
$/$ and $\sqrt{\cdot}$ can all be computed with the same high relative
accuracy, the \emph{latency}, or number of clock cycles it takes to execute
each operation, can be quite different, even for modern processors. The results
published in \cite{fog2025} suggest that the operations $/$ and $\sqrt{\cdot}$
require on average 2--5 times more cycles than the operations $+$ and $\ast$ for
real, double-precision variables. 

Figure~\ref{fig:optimes} shows the total clock time for computing $10^8$
sequential operations of each type on an Intel Core i7-4790 processor. Here we
clearly see that $/$ and $\sqrt{\cdot}$ require more time on average than $+$
and $\ast$ when executed sequentially.
\begin{figure}[h!tbp]
\centering
\begin{tikzpicture}
\begin{axis}[
  xbar,
  xmin=0,
  ytick=data,
  yticklabels={
    ADDITION,
    MULTIPLICATION,
    DIVISION,
    SQUARE-ROOT
  },
  xlabel={Time (sec)},
  width=0.8\linewidth,
  height=6cm,
  bar width=12pt,
  enlarge y limits=0.3,
]
  \addplot coordinates {
    (0.08,0)
    (0.13,1)
    (0.25,2)
    (0.51,3)
  };
\end{axis}
\end{tikzpicture}
\caption{Total compute time in seconds for executing $10^8$ sequential, real,
double-precision operations of each type on an Intel Core i7-4790 processor.}
\label{fig:optimes}
\end{figure}

\subsection{Square-root-free unitary core-chasing}

Our second experiment compares the runtime and accuracy of using four different
methods to compute the roots of unity, i.e., the zeros of the polynomial $z^n =
1$. This can be reformulated as a unitary upper Hessenberg eigenvalue problem
using the companion matrix. The four methods we compare are the dense QR
algorithm for complex upper Hessenberg matrices \texttt{zhseqr} from LAPACK,
the unitary core-chasing algorithm from \cite{aurentz2015a}, denoted UQR, and
Algorithms~\ref{alg:symto} and \ref{alg:rfsymto}, denoted SYMUQR and RFUQR,
respectively. The code for the methods UQR, SYMUQR, and RFUQR is available at
\url{github.com/eiscor/eiscor}.

Figure~\ref{fig:roots} plots the average runtime and the worst forward error in
the computed roots of unity. From the plot of average runtime we can see the
difference between the $O(n^3)$ complexity of dense QR from LAPACK and the
$O(n^2)$ algorithms that exploit the special unitary upper Hessenberg
structure. From the plot of worst-case forward errors in the computed
eigenvalues we can see that all four methods have comparable accuracy,
computing approximate eigenvalues that are within a modest multiple of machine
precision of the true roots.  
\begin{figure}[h!tbp]
  \centering
  \begin{tikzpicture}
    \begin{loglogaxis}[
      name=numplot1,
      scale only axis,
      xmin=1, xmax=32768,
      ymin=1e-7, ymax=1e4,
      xtick={1e1,1e2,1e3,1e4},
      ytick={1e-6,1e-3,1e0,1e3},
      xticklabel pos=right,
      ylabel={Time in seconds},
      every axis y label/.style= {at={(-0.15,0.5)},rotate=90},
      width=0.85\textwidth,
      height=0.40\textheight,
      axis on top,
      legend style = {
        fill opacity = 1,      
        draw opacity = 1,
        text opacity = 1,
        cells = {anchor = west},
        legend pos = north west,
        at = {(0.025,0.975)},
      },
      cycle list = {%
        SPECorange, semithick\\
        SPECblue, semithick, dotted\\
        SPECred, semithick, dashdotted\\
        SPECgreen, semithick, dashed\\
      },
      ]
      
 \addplot coordinates{ 
(      2 , 3.4180E-06 )%
(      4 , 1.3672E-05 )%
(      8 , 4.6875E-05 )%
(     16 , 1.2891E-04 )%
(     32 , 4.4531E-04 )%
(     64 , 1.9062E-03 )%
(    128 , 2.7062E-02 )%
(    256 , 2.1769E-01 )%
(    512 , 1.7212E+00 )%
(   1024 , 9.4750E+00 )%
(   2048 , 7.1748E+01 )%
(   4096 , 6.2936E+02 )%
 };

 \addplot coordinates{ 
(      2 , 3.6621E-07 )%
(      4 , 3.6621E-06 )%
(      8 , 1.3184E-05 )%
(     16 , 4.1016E-05 )%
(     32 , 1.2695E-04 )%
(     64 , 3.7109E-04 )%
(    128 , 1.2187E-03 )%
(    256 , 4.3906E-03 )%
(    512 , 1.6781E-02 )%
(   1024 , 6.5187E-02 )%
(   2048 , 2.5575E-01 )%
(   4096 , 1.0240E+00 )%
(   8192 , 4.0530E+00 )%
(  16384 , 1.6283E+01 )%
 };

 \addplot coordinates{ 
(      2 , 2.4414E-07 )%
(      4 , 2.1973E-06 )%
(      8 , 1.0254E-05 )%
(     16 , 3.1250E-05 )%
(     32 , 9.1797E-05 )%
(     64 , 2.8906E-04 )%
(    128 , 9.7656E-04 )%
(    256 , 3.5469E-03 )%
(    512 , 1.3656E-02 )%
(   1024 , 5.3625E-02 )%
(   2048 , 2.1225E-01 )%
(   4096 , 8.4825E-01 )%
(   8192 , 3.3755E+00 )%
(  16384 , 1.3485E+01 )%
 };

 \addplot coordinates{ 
(      2 , 2.4414E-07 )%
(      4 , 2.1973E-06 )%
(      8 , 8.3008E-06 )%
(     16 , 2.4414E-05 )%
(     32 , 7.8125E-05 )%
(     64 , 2.3437E-04 )%
(    128 , 7.7344E-04 )%
(    256 , 2.7656E-03 )%
(    512 , 1.0563E-02 )%
(   1024 , 4.1375E-02 )%
(   2048 , 1.6350E-01 )%
(   4096 , 6.5350E-01 )%
(   8192 , 2.6105E+00 )%
(  16384 , 1.0395E+01 )%
 };
 
      \legend{LAPACK, UQR, SYMUQR, RFUQR};
    \end{loglogaxis}
    
    \begin{loglogaxis}[
      at = {($(numplot1.south)+(0,-3pt)$)},
      anchor = north,
      scale only axis,
      xmin=1, xmax=32768,
      ymin=1e-17, ymax=1e-12,
      xtick={1e1,1e2,1e3,1e4},
      ytick={1e-16,1e-13},
      xlabel={Dimension},
      ylabel={Accuracy},
      every axis y label/.style= {at={(-0.15,0.5)},rotate=90},
      width=0.85\textwidth,
      height=0.20\textheight,
      axis on top,
      cycle list = {%
        SPECorange, semithick\\
        SPECblue, semithick, dotted\\
        SPECred, semithick, dashdotted\\
        SPECgreen, semithick, dashed\\
      },
      ]

 \addplot coordinates{ 
(      2 , 4.4409E-16 )%
(      4 , 6.6616E-16 )%
(      8 , 2.2037E-15 )%
(     16 , 2.0659E-15 )%
(     32 , 2.4101E-15 )%
(     64 , 2.8802E-15 )%
(    128 , 7.0027E-15 )%
(    256 , 9.7548E-15 )%
(    512 , 1.5870E-14 )%
(   1024 , 2.0795E-14 )%
(   2048 , 2.2094E-14 )%
(   4096 , 3.3426E-14 )%
 };

 \addplot coordinates{ 
(      2 , 1.2246E-16 )%
(      4 , 4.9388E-16 )%
(      8 , 8.6711E-16 )%
(     16 , 8.6711E-16 )%
(     32 , 1.4937E-15 )%
(     64 , 1.6947E-15 )%
(    128 , 2.8305E-15 )%
(    256 , 4.5685E-15 )%
(    512 , 5.5876E-15 )%
(   1024 , 1.1019E-14 )%
(   2048 , 2.1484E-14 )%
(   4096 , 4.1692E-14 )%
(   8192 , 8.7631E-14 )%
(  16384 , 1.5447E-13 )%
 };

 \addplot coordinates{ 
(      2 , 1.2246E-16 )%
(      4 , 5.3788E-16 )%
(      8 , 7.1089E-16 )%
(     16 , 1.6505E-15 )%
(     32 , 3.3766E-15 )%
(     64 , 4.6061E-15 )%
(    128 , 7.8779E-15 )%
(    256 , 1.2905E-14 )%
(    512 , 2.2580E-14 )%
(   1024 , 4.3451E-14 )%
(   2048 , 8.6164E-14 )%
(   4096 , 1.7280E-13 )%
(   8192 , 3.5994E-13 )%
(  16384 , 7.1828E-13 )%
 };
      
 \addplot coordinates{ 
(      2 , 1.2246E-16 )%
(      4 , 2.8140E-16 )%
(      8 , 6.2804E-16 )%
(     16 , 1.6041E-15 )%
(     32 , 1.0007E-15 )%
(     64 , 1.5983E-15 )%
(    128 , 3.3611E-15 )%
(    256 , 7.7238E-15 )%
(    512 , 1.1868E-14 )%
(   1024 , 2.0482E-14 )%
(   2048 , 4.0470E-14 )%
(   4096 , 7.4847E-14 )%
(   8192 , 1.5752E-13 )%
(  16384 , 3.1717E-13 )%
 };
 
    \end{loglogaxis}
  \end{tikzpicture}
  \caption{Runtime and accuracy for computing the roots of unity using four different algorithms.}
  \label{fig:roots}
\end{figure}

Table~\ref{tab:4096} gives the runtime and accuracy of the four methods for the
specific value $n = 4096$. From the last three columns it is clear that
exploiting the unitary upper Hessenberg structure greatly reduces the overall
runtime when compared to LAPACK. The last two columns also show that exploiting
symmetry and removing square roots can further reduce the total runtime, even
when compared to other methods that exploit this structure. 
\begin{table}[htbp]
\centering
\begin{tabular}{c|cccc}
$n=4096$  & LAPACK & UQR & SYMUQR & RFUQR \\
\hline
Runtime (sec) & $629$ & $1.02$ & $0.85$ & $0.65$ \\
Accuracy & $3.3 \times 10^{-14}$ & $4.2 \times 10^{-14}$ & $1.7 \times 10^{-13}$ & $7.5 \times 10^{-14}$
\end{tabular}
\caption{Runtime and accuracy of the four methods for the case $n=4096$.}
\label{tab:4096}
\end{table}

\section{Conclusions}
\label{sec7_conclusions}

In this article we presented a matrix-based derivation of a square-root-free
unitary core-chasing algorithm for computing eigenvalues of unitary
upper Hessenberg matrices. The derivation is based on a symmetry in the
turnover operation used in the unitary core-chasing algorithm from
\cite{aurentz2015a}, which reduces the arithmetic needed to perform the
core-chasing step and exposes the connection with square-root-free QR
algorithms for symmetric tridiagonal matrices. We showed that the na\"ive
symmetric turnover formulas can be unstable when the core transformations are
only nearly-unitary, as occurs in floating-point arithmetic, and introduced a
change of variables, analogous to Gragg's stabilized recurrence, that avoids
this cancellation and leads to a stable symmetric turnover for nearly-unitary
data. The same change of variables also makes it possible to derive a
square-root-free version of the algorithm. The floating-point analysis shows
that both the symmetric and square-root-free turnovers are stable for
nearly-unitary input, and that the resulting forward error can be converted
into a local backward error result for the turnover, although a strict
structured backward error result is impossible in general. Finally, the
numerical experiments demonstrate that the combination of exploiting the
symmetry in the turnover and casting out square roots gives accuracy
comparable to LAPACK and to the unitary core-chasing algorithm in
\cite{aurentz2015a}, while reducing the overall runtime.

\section*{Acknowledgments}
OpenAI ChatGPT was used during the preparation of this manuscript to assist
with checking mathematical derivations, exploring examples, literature
searches, and improving the presentation of the manuscript. All mathematical
arguments, references, computations, and conclusions were independently
reviewed and verified by the authors. The authors assume responsibility for all
content.

\bibliographystyle{siamplain}
\bibliography{rfuqr_references}

\end{document}